\newtheorem{theorem}{Theorem}
\newtheorem{proposition}{Proposition}
\newtheorem{remark}{Remark}
\newtheorem{definition}{Definition}
\newtheorem{corollary}{Corollary}
\DeclareMathOperator{\ad}{ad}
\DeclareMathOperator{\Ad}{Ad}
\begin{document}

\title[The G\"odel Universe as the Lie group]{The G\"odel Universe as  the Lie Group with left-invariant Lorentz metric}
\author{V.~N.~Berestovskii}
\thanks{The work was carried out within the framework of the State Contract to the IM SB RAS, project FWNF-2022-0006.}
\address{Sobolev Institute of Mathematics of the SB RAS,\newline
4 Acad. Koptyug Ave., Novosibirsk 630090, Russia}
\email{vberestov@inbox.ru}

\begin{abstract}
The author studies the G\"odel Universe as the Lie group with left-invariant Lorentz metric. The expressions for timelike and isotropic geodesics in elementary functions are found by methods of geometric theory of optimal control for the search of geodesics on Lie groups with left-invariant (sub-)Lorentz metrics. It is proved that the G\"odel Universe has no closed
timelike or isotropic geodesics. 

\medskip

2020 Mathematical Subject Classification: 83C20, 53C50, 49J15
	
\vspace{2mm}
\noindent {\it Keywords and phrases:} closed isotropic curve, closed timelike curve, G\"odel Universe, left-invariant Lorentz metric, Lie group, isotropic geodesic, timelike geodesic      
\end{abstract}    	
\maketitle

\section{Introduction}

Kurt G\"odel in paper \cite{God1949} of 1949 introduced in the space  $\mathbb{R}^4$ a Lorentz metric (\ref{lig}) with signature $(+,-,-,-).$ The G\"odel Universe (space-time) $S$ is a solution of Einstein's field equations of gravitation. 

In the first section of the paper, G\"odel indicates the main properties of
his (called new by him) solution. Let us mention some of them in other terms. 

All cosmological solutions with positive density of the matter, known before that time, admitted the {\it absolute time}, whose level hyperspaces are transversal to world lines of matter. (Let us note that usually such hyperspaces are called {\it Cauchy surfaces}). The nonexistence of such hyperspaces (by G\"odel's mind) is equivalent to the matter rotation relative to the inertia axes. G\"odel suggested the solution, the space-time $S,$ with negative cosmological term, with such rotation.   

G\"odel states that the space-time $S$ has the following properties:

1) The space-time  $S$ is homogeneous. Then $S$ is stationary.

3) $S$ has a rotation symmetry: for every point $P$ in $S$
there exists a one-parameter isometry group of $S$ with fixed point $P$.

4) The space of timelike and isotropic vectors possesses a temporal orientation. That is, it is possible to introduce consistently (continuously) a positive direction of the time into all system.

After that, it is defined the temporal ordering for any two neighboring points. On the other hand, however, there is no uniform temporal ordering 
of {\it all} points, agreeing with local orderings. This is expressed in the next property:

5) It is impossible to assign a time coordinate $t\in\mathbb{R}$ to each 
point of the space in such a way that $t$ always increases, if one moves in
the positive timelike direction.

6) Every world line of matter, occuring in the solution, is an open line of infinite length, which never approaches any of its preceding points again;
but there also exist closed timelike lines.

7) There exist no hyperspace in $S$ which is everywhere spacelike and intersects each world line of matter in exactly one point.

9) Matter everywhere rotates relative to the inertia axis with angular velocity $2(\pi\kappa\rho)^{1/2},$ where $\rho$ is the mean density of matter and $\kappa$ gravitational constant.

Interesting property 6) is described more exactly later in \cite{God1949} relative to closed timelike curves. An elegant and simple geometric interpretation of the ''world lines of matter'' (together with $S$) is given on pp. 449 and 450 in \cite{God1949}. 

World lines of matter are the simplest timelike geodesic (\ref{wl}) and all its left shifts (\ref{gr}) by elements of the Lie group $G$. These are exactly the inertia axes. Any of them is the axis of some one-parameter rotation group.

In \cite{God1949} is given also an explicit description of closed isotropic curves.

Notice that G\"odel says in the footnote to mention on the absolute time
about philosophical corollaries of this concept, and besides other sourses, he references to his essay \cite{God1949a} (which was also published in 1949) from the collection, dedicated to the 70th anniversary of Einstein, . 

Let us mention only the following words of Einstein in \cite{Einst} on  G\"odel's essay:

''Kurt G\"odel's essay constitutes, in my opinion, an important contribution
to the general theory of relativity, especially to the analysis of the concept of time. The problem here involved disturbed me already at the time of the building up of the general theory of relativity. ...
It will be interesting to weigh whether these (cosmological solution by G\"odel of Einstein's field equations with negative constant $\Lambda$) are not to be excluded on  physical grounds''.

In paper \cite{God1950}, mainly without proofs, G\"odel investigates the properties of families of rotating Universes (solutions) without closed
timelike and isotropic curves, with the null density of matter, of two types: I) spatially homogeneous and expanding; II) stationary and homogeneous.  

I) In this case 1) the spaces are compact, 2) the density is not constant.   
The last contition implies that the models are expanding.

II) In this case G\"odel formulates the following results.

1) There exist no stationary homogeneous solutions with $\Lambda=0.$

2) There exist rotation stationary homogeneous solutions with compact space, the absence of closed timelike curves, and $\Lambda>0,$
between them are arbitrary little differing from Einstein Universe.

These results of G\"odel precedes many investigation of such models. The work \cite{OzsSchuck} and other articles of its authors follow the G\"odel program.

In 1951, G\"odel received the Albert Einstein Award from the fund of Lewis and Rose Strauss for works on rotating Universes; Einstein himself handed in it.

Among the papers of G\"odel was found manuscript of his lecture ''On rotating Universes'' published in 1995 in the third volume of his Collected Works  \cite{GodColl}. G\"odel gave this lecture on May 7, 1949 at the Institute of Advanced Studies, Princeton. 

The literal quotation of the very first sentence of this lecture: 

''A few years ago, in a note in {\it Nature}, Gamov \cite{Gam1946} suggested that the whole universe might be in a state of uniform rotation and that this
rotation might explain the observed rotation of the galactic systems''.

The title of the next paper by Gamov \cite{Gam1952} suggests that the exact 
epithet for the G\"odel Universe is ''Turbulent G\"odel Universe''. 

The form of geodesics, discovered in theorems 1 and 2 of our article, and the homogeneity of G\"odel Universe supports this. 

In book \cite{ZeldNov} there is the section 15.9 with the synonymous title  ''The Vortex Theory''. It begins with statements that ''The vortex theory derives from the assumption that during an easrly RD stage the plasma is in a state of turbulent motion. In a qualitative form, the vortex theory has been propounded by Gamov (1952, 1954).'' 

The issues \cite{Kundt} --- \cite{Andr21}, disposed in the chronological order, have a direct relation to investigations of G\"odel Universe.
We shall say additionally about \cite{Kundt}, \cite{HockEll}, \cite{Lev19900} --- \cite{Steph}, \cite{Andr17}, \cite{Andr21} in section
\ref{last} of the paper.                
  
In paper \cite{ChWr}, S.~Chandrasekhar and J.~Wright found, as they say,
a full solution to equations of geodesics in $S$ (with application of Christoffel symbols).  

In this paper, we investigate G\"odel Universe $S$ as the Lie group $G$ 
with left-invariant Lorentz metric and find all timelike and isotropic 
geodesics in $S$. Their view implies that there are no closed geodesics among them.  We apply a method of geometric theory of the optimal control for the search of geodesics on Lie groups with left-invariant  (sub-)Lorentz metric. Under this geodesics are solutions to a system of ordinary differential equations of the first order while the corresponding system 
of equations for the classical method has the second order.

The applied method for the case of timelike geodesics was formulated in \cite{BerGich2001}. The first time this method in its partially linearized version was applied to solve a problem posed in \cite{Ber2023}. The results \cite{BerZub2} will be published in an issue of the journal ''Pure and Applied Functional Analysis'' dedicated to professor Anatoly Vershik in connection of his 90th anniversary (28.12.2023). 

Anatoly Moiseevich suddenly passed away on February 14, 2024. 

As a stimulus to write the paper \cite{Ber2023} served an interest to the Segal chronometric theory initiated by discussion with A.V.~Levichev.
The author is very grateful to A.V.~Levichev not only for this, but also for discussions on G\"odel Universe, among them for the indication to works \cite{Lev19900}--\cite{Lev1992} and exhibited interest.    

The author thanks I.A.~Zubareva for critique and great assistance in work.

\section{Timelike and isotropic geodesics}

In \cite{God1949}, G\"odel introduces his space-time $S$ as
$\mathbb{R}^4$ with linear element
\begin{equation}
\label{lig}
ds^2=a^2\left(dx_0^2+2e^{x_1}dx_0dx_2+ \frac{e^{2x_1}}{2}dx_2^2-dx_1^2-dx_3^2\right),\quad  a>0. 
\end{equation}
G\"odel notes in \cite{God1949} that this quadratic form can be written as 
\begin{equation}
\label{lig1}
ds^2=a^2\left[\left(dx_0+e^{x_1}dx_2\right)^2-dx^2_1-
\frac{e^{2x_1}}{2}dx^2_2-dx^2_3\right],	  
\end{equation}
which evidently implies that its signature is $(+,-,-,-)$ everywhere. 

G\"odel Universe is a solution to Einstein gravitational equations 
$$R_{ik}-\frac{1}{2}Rg_{ik}=8\pi\kappa\rho u_iu_k+\Lambda g_{ik},\quad T_{ik}=\rho u_iu_k;$$
$$R=\frac{1}{a^2},\quad R_{ik}=\frac{1}{a^2}u_iu_k,\quad \frac{1}{a^2}=8\pi\kappa\rho,\quad \Lambda= -\frac{R}{2},$$
where $\rho$ is a positive constant density of matter, $u$ is a normed 
vector of 4-velocity. ''The sign of the cosmological constant here is the opposite of that occuring in Einstein static solution. It corresponds to a positive pressure'' \cite{God1949}.

\begin{remark}
It is indicated in \cite{Kundt} and \cite{HockEll} that the pressure $p$ in $S$ is absent. Ellis say in \cite{Ellis} that there are many methods to construct G\"odel solution because of its large symmetry. If $\omega$ is the {\it vorticity}, then there are possible the cases: 
$$p=0,\quad \Lambda=-\omega^2<0,\quad (2.9);\quad \Lambda=0,\quad p=\omega^2/\kappa,\quad (2.10).$$	
\end{remark}	

We are interested mainly in geodesics. Timelike geodesics are locally longest timelike curves joining given pairs of points. Therefore we can assume that $a=1.$

Obviously, $(S,ds^2)=(S_0,ds_0^2)\times (S_1,ds_1^2),$ where $S_0=\mathbb{R}^3,$
$S_1=\mathbb{R},$
\begin{equation}
\label{lin}	
ds_0^2=dx_0^2+2e^{x_1}dx_0dx_2+ \frac{e^{2x_1}}{2}dx_2^2-dx_1^2,\quad ds_{1}^2=-dx_3^2.	
\end{equation}
This implies that the search for geodesics in $(S,ds^2)$ reduces to the search in $(S_0,ds_0^2)$.

Further G\"odel introduces in $S_0$ ''cylindrical coordinates'' $(r,\varphi,t)$ by formulae
$$e^{x_1}=\cosh 2r + \cos\varphi\sinh 2r,$$
$$x_2e^{x_1}=\sqrt{2}\sin\varphi\sinh 2r,$$
$$\tan\left(\frac{\varphi}{2}+\frac{x_0-2t}{2\sqrt{2}}\right)=
e^{-2r}\tan\left(\frac{\varphi}{2}\right),
\quad\mbox{where}\quad \left|\frac{x_0-2t}{2\sqrt{2}}\right|< \frac{\pi}{2}.$$
This leads to the following form of the linear element:
\begin{equation}
\label{cyl} 
ds_0^2=4[dt^2-dr^2+\sinh^2r((\sinh^2r-1)d\varphi^2+2\sqrt{2}d\varphi dt)].
\end{equation}
Here G\"odel omits computations, noting that they are cumbersome. The components of this linear element do not depend on $t$ and $\varphi$. Then, in these coordinates the shifts by $t$ and rotations around axis $r=0$ by $\varphi$ are isometries in $(S_0,ds_0^2).$ 

Taking account of G\"odel's remark in \cite{God1949}, that on $(S,ds^2)$ ($(S_0,ds_0^2)$) acts simply transitively four-dimensional (respectively, three-dimensional) isometry Lie group $G$ (respectively, $G_0$), we get that for any point $P\in S$ ($P_0\in S_0$) there exists a one-parameter group  of rotations--isometries fixing the point $P$ ($P_0$).

\begin{remark}
\label{cylin}	
It follows from the given above formulae for coordinates
$(r,\varphi,t)$ that $x_1=x_2=0$ and $x_0=2t$ (for close to zero $x_0, t$) if $r=0.$ This coordinate system has a singularity since for $r=0$ the Jacobian of transition from $(r,\varphi,t)$ to $(x_0,x_1,x_2)$ is equal to zero \cite{Lev1992}. Therefore later we do not apply in fact these coordinates. 
\end{remark}

It is easy to see that the mentioned action of the Lie group $G$ is given by formulae
\begin{equation}
\label{gr}
x_0=x_0'+a,\quad x_1=x_1'+b,\quad x_2=x_2'e^{-b}+c,\quad x_3=x_3'+d
\end{equation}
with arbitrary $a,b,c,d\in\mathbb{R}.$ This implies that the Lie group $G$
is a simplest noncommutative four-dimensional Lie group of the form
$$G\cong (\mathbb{R},+)\times G_{2}\times (\mathbb{R},+),$$ 
where $G_2$ is unique up to isomorphism, diffeomorphic to $\mathbb{R}^2$, two-dimensional noncommutative Lie group. In addition,
\begin{equation}
\label{g1}
G_0\cong (\mathbb{R},+)\times G_{2}.
\end{equation}

In case under consideration, identifying the triple $(x'_0,x_1',x_2')$ with the vector--column $(x_2',x_1',x'_0,1)^T,$ where $^T$ is the transposition sign,
the action of the group $G_0$ on $\mathbb{R}^3$ by formulae $(\ref{gr})$ has a form $(x_2,x_1,x_0,1)^T=A(x_2',x_1',x'_0,1)^T,$ where 
\begin{equation}
\label{A}
A= \left(\begin{array}{cccc}
	e^{-b} & 0 & 0 & c \\
	0 & 1 & 0 & b \\ 
	0 & 0 & 1 & a\\
    0 & 0 & 0  & 1\end{array}\right).
\end{equation}
Moreover, the equality 
\begin{equation}
\label{B}
\left(\begin{array}{cccc}
e^{-x_1} & 0 & 0 & x_2 \\
0 & 1 & 0 & x_1 \\ 
0 & 0 & 1 & x_0\\
0 & 0 & 0 & 1 \end{array}\right)(0,0,0,1)^T=(x_2,x_1,x_0,1)^T
\end{equation}
establishes a bijection of the group $G_0$ onto $\mathbb{R}^3$ and the unit of 
the group $G_0$ corresponds to zero-vector $(0,0,0)\in\mathbb{R}^3.$ On the base of this, (\ref{g1}) and (\ref{lin}),  we can identify $(S_0,ds_0^2)$
with the Lie group $G_0$ supplied by left-invariant Lorentz metric.	

As a corollary of the above and (\ref{lin}) in the unit of group $G_0,$ the components of linear element $ds_0^2$ relative to natural basis  $(e_0=e_{x_0},e_1=e_{x_1},e_2=e_{x_2}),$ connected with coordinates $(x_0,x_1,x_2),$ are equal to
\begin{equation}
\label{g} 
g_{00}=1,\quad g_{02}=g_{20}=1,\quad g_{22}=\frac{1}{2},\quad g_{11}=-1,\quad g_{01}=g_{10}=g_{12}=g_{21}=0.
\end{equation}
In a consequence of (\ref{B}), in the Lie algebra $\mathfrak{g}_0$ of the Lie group $G_0,$ 
\begin{equation}
\label{e}	
e_0 = \left(\begin{array}{cccc}
	0 & 0 & 0 & 0 \\
	0 & 0 & 0 & 0 \\
	0 & 0 & 0 & 1\\ 
	0 & 0 & 0 & 0\end{array}\right),\quad	
e_1 = \left(\begin{array}{cccc}
	-1 & 0 & 0 & 0 \\
	0 & 0 & 0 & 1 \\
	0 & 0 & 0 & 0\\ 
	0 & 0 & 0 & 0\end{array}\right),\quad 
e_2 = \left(\begin{array}{cccc}
	0 & 0 & 0 & 1 \\
	0 & 0 & 0 & 0\\
	0 & 0 & 0 & 0 \\ 
	0 & 0 & 0 & 0\end{array}\right).	 
\end{equation}
As a corollary, for the Lie algebra $\mathfrak{g}_0$,
\begin{equation}
\label{sc}
[e_1,e_2]=e_1e_2-e_2e_1=-e_2, \quad [e_0,e_1]=[e_0,e_2]=0.	 
\end{equation}

Let us choose basis $(e_0,e_1,e'_2)$ in $\mathfrak{g}_0,$ orthonormal relative to $ds_0^2.$ 

On the ground of (\ref{g}), $e'_2=\alpha(e_0-e_2),$ $\alpha\in\mathbb{R},$ and 
$$-1=(e'_2,e'_2)=\alpha^2\left(1-2+\frac{1}{2}\right)=
-\frac{\alpha^2}{2},\quad \alpha=\sqrt{2},\quad e'_2=\sqrt{2}(e_0-e_2).$$
Moreover $[e_1,e'_2]=\sqrt{2}e_2=\sqrt{2}(e_0-(e_0-e_2))=\sqrt{2}e_0-e'_2$
and in chosen orthonormal basis of the Lie algebra $\mathfrak{g}_0,$ all non-zero structure constant are the following
\begin{equation}
\label{str}
C^0_{12}=-C^0_{21}=\sqrt{2}, \quad C^2_{12}=-C^2_{21}=-1. 
\end{equation} 

We proved in theorems 2 and 3 from \cite{BerZub2} that every timelike or isotropic geodesic $\gamma=\gamma(t),$ $t\in \mathbb{R},$ on arbitrary Lie group $G$ (with origin at its unit) with left-invariant (sub-)Lorentz metric  $(\cdot,\cdot)$ and orthonormal relative to it basis $(e_0,e_1,\cdots,e_n)$ in the Lie algebra $\mathfrak{g}$ is a solution of the system of ordinary differential equations
\begin{equation}
\label{equat}
\gamma'(t)=dl_{\gamma(t)}(u(t)),\quad u(t)=\psi_0(t)e_0-\sum\limits_{i=1}^{r}\psi_i(t)e_i,\quad \psi_0(t)>0,
\end{equation}
\begin{equation}
\label{equat1}
\psi_j'(t)=\sum\limits_{k=0}^{n}\left(C_{0j}^{k}\psi_0\psi_k-\sum\limits_{i=1}^{r}C_{ij}^{k}\psi_i\psi_k\right),\,\,j=0,\dots,n.
\end{equation}
Here $C_{ij}^{k}$ are structure constants in the basis $(e_0,\dots, e_n)$ of the Lie algebra $\mathfrak{g}$. 

\begin{remark}
\label{opt}
Set $\psi(t)=(\psi_0(t),\psi_1(t),\dots, \psi_n(t))\in\mathbb{R}^{n+1},$	$$v=v_0e_0+\sum\limits_{i=1}^nv_ie_i\in\mathfrak{g},\quad	\psi(t)(v):=\sum\limits_{j=0}^n \psi_j(t)v_j,\quad U=\{u\in\mathfrak{g}: u_0>0, (u,u)\geq 1\}.$$
Then the equation (\ref{equat1}) and the equality for $u(t)\in \mathfrak{g}$ in (\ref{equat}) for the case of timelike geodesic are equivalent to conditions 
\begin{equation}
\label{equat2}	
\psi(t)(u(t))=(u(t),u(t))=\min_{u\in U}\psi(t)(u)\equiv 1, 	
\end{equation}
\begin{equation}
\label{equat3}	
\psi'(t)(v)= [u(t),v]=\ad(u(t))(v)\quad\mbox{for every}\quad v\in \mathfrak{g}. 	
\end{equation}
\end{remark}

\begin{definition}
\label{def}	
Closed non-compact strictly convex set $U$ is called the {\it control region}, $u(t),$ $t\in\mathbb{R}$ is {\it control function} (or simply {\it control}). Relations (\ref{equat2}), (\ref{equat3}) together with differential equation in  (\ref{equat}) is the left-invariant Pontryagin Minimum Principle for the slow-optimal problem.
\end{definition}  

Remark \ref{opt} and Definition \ref{def} are equivalent to (consisting of three principles) Theorem 12 from \cite{BerGich2001}, in which are stated necessary conditions for {\it directed into future longest timelike curves} of left-invariant metrics (more general than (sub-)Lorentz) on Lie groups $G$. Curves, satisfying such conditions, are called {\it extremals}. 

In general case, timelike geodesics of left-invariant (sub-)Lorentz metric are not globally, but only locally longest curves.

In the statements of Theorem 12 from \cite{BerGich2001}, unlike Remark \ref{opt}, is applied the adjoint representation $\Ad$ of the Lie group $G$. Let us notice that  
$$\ad(w)=\frac{d\Ad(\exp(sw))}{ds}(0),\quad w\in\mathfrak{g}.$$ 
Here $\exp$ is the exponential mapping of the Lie algebra $\mathfrak{g}$ 
into Lie group $G,$ $\exp(sw),$ $s\in\mathbb{R},$ is the one-parameter subgroup in $G$ with initial tangent vector $w,$ $\ad(\cdot)$ is the adjoint representation of the Lie algebra $(\mathfrak{g},[\cdot,\cdot]).$
Apparently such linearization of the representation $\Ad$ is useful since it reduces a part of the search problem for geodesics (although splits its solution by two steps) to algebraic problem, while differential equations (\ref{equat1}) are one-type for all Lie algebras. But for the matrix Lie groups a realization of $\Ad$ is also algebraic.

Notice that in case of Lie groups with left-invariant Lorentz metric Theorem  12 in \cite{BerGich2001} gives the same differential equations {\it of the first order} for timelike geodesics as \cite{Gav}; Theorem 12 is applicable for left-invariant sub-Lorentz metrics unlike  \cite{Gav}.  

We have $r=n=2; i,j,k=0,1,2.$ By (\ref{str}), equations (\ref{equat1}) take a form 
$$\psi'_j(t)=-\sum\limits_{k=0}^2\left(\sum\limits_{i=1}^2C^k_{ij}\psi_i(t)\psi_k(t)\right)=-
\left(\sum\limits_{i=1}^2C^0_{ij}\psi_i(t)\psi_0(t)+\sum\limits_{i=1}^2C^2_{ij}\psi_i(t)\psi_2(t)\right).$$

Using (\ref{str}) again, we obtain $\psi'_0(t)=0,\psi_0(t)\equiv \varphi_0,$
\begin{equation}
\label{dif}
\psi'_1(t)=\psi_2(t)(\sqrt{2}\varphi_0-\psi_2(t)),\quad \psi'_2(t)=-\psi_1(t)(\sqrt{2}\varphi_0-\psi_2(t)).
\end{equation}

\begin{remark}
In case of isotropic (timelike) geodesic, we assume that  
$(u(0),u(0))=0, \varphi_0\equiv \psi_0(t)\equiv 1$ ( $(u(0),u(0))=1, \varphi_0\geq 1$ ).
\end{remark}

Let us begin with timelike geodesic in case when $\varphi_0= 1$. Then  $$(u(t),u(t))\equiv (u(0),u(0))=1, \psi_1(t)=\psi_2(t)\equiv 0, u(t)\equiv u(0)= e_0,$$
\begin{equation}
\label{wl}	
\gamma(t)=\exp(te_0)= te_0=(x_0(t)= t,x_1(t)=0,x_2(t)=0, x_3(t)=0), t\in\mathbb{R}.
\end{equation}
Later in case of timelike geodesic we assume that  
$\varphi_0>1.$ 

\begin{proposition}
\label{ut}
Any geodesic in $(S_0,ds_0^2)$ satisfies equalities $$u(t)=(\varphi_0-b\sqrt{2}\cos\theta(t))e_0+b(\sin\theta(t)e_1
+\sqrt{2}\cos\theta(t)e_2),$$
where $\theta(t)$ is a solution of the ordinary differential equation 
\begin{equation}
\label{thet}
\frac{d\theta}{dt}=b\cos\theta(t)-a,
\end{equation}
where $a=\sqrt{2}\varphi_0,$ and $b=1,$ $b=\sqrt{\varphi^2_0-1},$ 
for isotropic and timelike geodesics respectively. 
\end{proposition}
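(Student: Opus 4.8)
The plan is to integrate the reduced system (\ref{dif}) explicitly, exploiting an obvious first integral to introduce the angle variable $\theta$, and then to rewrite the resulting $u(t)$ in the coordinate basis $(e_0,e_1,e_2)$.

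First I would note that $\psi_1^2+\psi_2^2$ is constant along any solution of (\ref{dif}): its derivative is $2\psi_1\psi_2(\sqrt2\varphi_0-\psi_2)-2\psi_2\psi_1(\sqrt2\varphi_0-\psi_2)=0$. Since $(e_0,e_1,e'_2)$ is orthonormal with $(e_0,e_0)=1$, $(e_1,e_1)=(e'_2,e'_2)=-1$, and the components in (\ref{equat})--(\ref{dif}) are taken in this orthonormal basis (cf. (\ref{str})), so that $u=\varphi_0 e_0-\psi_1 e_1-\psi_2 e'_2$, we get $(u(t),u(t))=\varphi_0^2-\psi_1(t)^2-\psi_2(t)^2$; this is therefore also constant. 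Evaluating at $t=0$ and using $(u(0),u(0))=1$ with $\varphi_0>1$ in the timelike case (by (\ref{equat2})) and $(u(0),u(0))=0$ with $\varphi_0=1$ in the isotropic case, I obtain $\psi_1(t)^2+\psi_2(t)^2\equiv b^2$ with $b$ as in the statement, and $b>0$ in both cases.

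Next, since $(\psi_1(0),\psi_2(0))$ lies on the circle of radius $b$, I choose $\theta_0$ with $\psi_1(0)=-b\sin\theta_0$ and $\psi_2(0)=b\cos\theta_0$, and let $\theta(t)$ be the solution on all of $\mathbb{R}$ of (\ref{thet}), $\theta'=b\cos\theta-a$ with $a=\sqrt2\varphi_0$, having $\theta(0)=\theta_0$. A direct differentiation shows that $t\mapsto(-b\sin\theta(t),\,b\cos\theta(t))$ solves (\ref{dif}) (indeed $\frac{d}{dt}(-b\sin\theta)=-b\cos\theta\cdot\theta'=b\cos\theta(a-b\cos\theta)$, which is $\psi_2(\sqrt2\varphi_0-\psi_2)$ with $\psi_2=b\cos\theta$, and similarly for the second equation) and has the same initial value as $(\psi_1,\psi_2)$; by uniqueness for the system, $\psi_1(t)=-b\sin\theta(t)$ and $\psi_2(t)=b\cos\theta(t)$ for all $t$. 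Substituting $e'_2=\sqrt2(e_0-e_2)$ into $u=\varphi_0 e_0-\psi_1 e_1-\psi_2 e'_2$ then yields precisely the asserted expression for $u(t)$.

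The main point needing care is the bookkeeping between the two bases: the structure constants (\ref{str}), the equations (\ref{dif}) and the sign condition $\psi_0>0$ all refer to the orthonormal basis $(e_0,e_1,e'_2)$, while the Proposition records $u(t)$ in the coordinate basis $(e_0,e_1,e_2)$; the other delicate step is fixing the constant $b$, which is where the timelike/isotropic distinction and the normalization $(u,u)\in\{0,1\}$ come in. The remaining computations are routine.
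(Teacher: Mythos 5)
Your proposal is correct and follows essentially the same route as the paper: derive the first integral $\psi_1^2+\psi_2^2\equiv b^2$ from the conservation of $(u,u)$ and $\psi_0$, parametrize $(\psi_1,\psi_2)=b(-\sin\theta,\cos\theta)$, and read off the ODE (\ref{thet}) from (\ref{dif}). Your additions --- verifying the first integral directly from (\ref{dif}), invoking ODE uniqueness to justify the angular parametrization globally, and writing out the basis change $e'_2=\sqrt2(e_0-e_2)$ --- only make explicit what the paper leaves implicit.
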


\begin{proof}
We easily obtain from equalities 
$$(u(t),u(t))\equiv (u(0),u(0)),\quad \psi_0(t)\equiv \psi_0(0)=\varphi_0$$ and values $(u(0),u(0))$ for two mentioned kinds of geodesics that $\psi^2_1(t)+\psi^2_2(t)=b^2.$ 
Therefore we can suppose that  
\begin{equation}
\label{psi}
(\psi_1(t),\psi_2(t))=b(-\sin\theta(t),\cos\theta(t)).
\end{equation}
Then in consequence of (\ref{dif}),
$$\psi'_1(t)=b(-\cos\theta(t)\theta'(t))=b\cos\theta(t)(a-
b\cos\theta(t)),$$
$$\psi'_2(t)=b(-\sin\theta(t)\theta'(t))=b\sin\theta(t)(a-b\cos\theta(t)),$$ which implies (\ref{thet}). 	
\end{proof}		

\begin{corollary}
\label{der}	
$b>0,$ $\theta'(t) < 0$. 	
\end{corollary}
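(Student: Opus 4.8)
The plan is to read both assertions straight off Proposition \ref{ut} together with the defining values of $a$ and $b$. First I would dispose of $b>0$: by hypothesis $b=1$ for isotropic geodesics, while $b=\sqrt{\varphi_0^2-1}$ for timelike geodesics with $\varphi_0>1$, so the radicand is positive; in both cases $b>0$. (This is also what makes the polar parametrization $(\psi_1,\psi_2)=b(-\sin\theta,\cos\theta)$ in the proof of Proposition \ref{ut} legitimate, since then $\psi_1^2+\psi_2^2=b^2$ with $b\ne 0$.)

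For $\theta'(t)<0$ I would use (\ref{thet}), i.e.\ $\theta'(t)=b\cos\theta(t)-a$ with $a=\sqrt{2}\,\varphi_0$, together with the elementary bound $\cos\theta(t)\le 1$. Since $b>0$ this yields $\theta'(t)\le b-a$ for all $t\in\mathbb{R}$, so it suffices to verify the strict inequality $b<a$ in each of the two cases. In the isotropic case $b=1$, $a=\sqrt{2}$, whence $b-a=1-\sqrt{2}<0$. In the timelike case $\varphi_0>1$, both $b=\sqrt{\varphi_0^2-1}\ge 0$ and $a=\sqrt{2}\,\varphi_0>0$, so squaring shows $b<a$ is equivalent to $\varphi_0^2-1<2\varphi_0^2$, i.e.\ to $-1<\varphi_0^2$, which always holds. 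Hence $b-a<0$ in both cases, and therefore $\theta'(t)\le b-a<0$ for every $t$.

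There is essentially no obstacle here: the corollary is a one-line consequence of (\ref{thet}) and $|\cos\theta|\le 1$. The only point worth flagging is that the inequality $\theta'(t)<0$ holds strictly at \emph{every} $t$, not merely almost everywhere, since the bound $\theta'(t)\le b-a<0$ is valid even at the instants where $\cos\theta(t)=1$; this strictness is exactly what will later be used to conclude that $\theta$ is a strictly decreasing function of $t$, hence injective, which in turn feeds into the non-existence of closed timelike or isotropic geodesics.
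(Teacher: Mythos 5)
Your proof is correct and is exactly the argument the paper leaves implicit (the corollary is stated without proof as an immediate consequence of Proposition \ref{ut}): $b>0$ by the definitions $b=1$ or $b=\sqrt{\varphi_0^2-1}$ with $\varphi_0>1$, and $\theta'(t)=b\cos\theta(t)-a\le b-a<0$ since $a=\sqrt{2}\varphi_0>b$ in both cases.
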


Separating variables in (\ref{thet}), we obtain according to formula 5.12.5 in \cite{BrMarPr86}
$$t-t_0=\int dt=\int \frac{d\theta}{-a+b\cos\theta}=\frac{2}{\sqrt{a^2-b^2}}\arctan\left(\frac{-(a+b)\tan(\theta/2)}{\sqrt{a^2-b^2}}\right)=$$
$$\frac{2}{\sqrt{a^2-b^2}}\arctan\left(-\sqrt{\frac{a+b}{a-b}}\tan(\theta/2)\right).$$ 
Hence we get consequently 
\begin{equation}
\label{the}
\tan\left(\frac{\theta}{2}\right)=-\sqrt{\alpha}\tan\sigma,\quad\mbox{where}\quad \alpha=\frac{a-b}{a+b},\quad \sigma=\frac{\sqrt{a^2-b^2}(t-t_0)}{2},	
\end{equation}
\begin{equation}
\label{cs}
\cos\theta(t)=\frac{1-\alpha\tan^2\sigma}{1+\alpha\tan^2\sigma},\quad \sin\theta(t)=\frac{-2\sqrt{\alpha}\tan\sigma}{1+\alpha\tan^2\sigma}.
\end{equation}

Below we shall need formulae (\ref{cs}) in the form
\begin{equation}
\label{cs1}
\cos\theta(t)=\frac{\cos^2\sigma-\alpha\sin^2\sigma}{\cos^2\sigma+\alpha\sin^2\sigma}=\frac{(1+\alpha)\cos 2\sigma+(1-\alpha)}{(1-\alpha)\cos 2\sigma+(1+\alpha)},
\end{equation}
\begin{equation}
\label{ss1} 
\sin\theta(t)=\frac{-2\sqrt{\alpha}\cos\sigma\cdot \sin\sigma}{\cos^2\sigma+\alpha\sin^2\sigma}=
\frac{-2\sqrt{\alpha}\sin 2\sigma}{(1-\alpha)\cos 2\sigma+(1+\alpha)}.
\end{equation}

For $t=2\sigma$ we find with help of 5.12.3 and 5.12.5 from \cite{BrMarPr86} indefinite integrals of (\ref{cs1}) and (\ref{ss1}), omitting additive constants:
$$\int\frac{(1-\alpha)+(1+\alpha)\cos t}{(1+\alpha)+(1-\alpha)\cos t}dt=
\frac{1}{1-\alpha}\left[(1+\alpha)t-4\sqrt{\alpha}\arctan(\sqrt{\alpha}\tan(t/2))\right],$$
$$\int\frac{-2\sqrt{\alpha}\sin t}{(1+\alpha)+(1-\alpha)\cos t}dt=\frac{2\sqrt{\alpha}}{1-\alpha}\ln((1+\alpha)+(1-\alpha)\cos t).$$ 
With the help of 5.12.1 from \cite{BrMarPr86} we compute the following integral 
$$\int\frac{(1-\alpha)+(1+\alpha)\cos t}{[(1+\alpha)+(1-\alpha)\cos t]^2}dt=\frac{\sin t}{(1+\alpha)+(1-\alpha)\cos t}.$$

\begin{remark}
\label{as}	
In case of timelike and isotropic geodesics respectively 
$$\alpha=\alpha_1=\frac{\sqrt{2}-\sqrt{1-(1/\varphi_0)^2}}{\sqrt{2}+\sqrt{1-(1/\varphi_0)^2}},\quad \sigma=\sigma_1=\frac{\sqrt{1+\varphi^2_0}(t-t_0)}{2};$$
$$\alpha=\alpha_2=\frac{\sqrt{2}-1}{\sqrt{2}+1}=(3+2\sqrt{2})^{-1},\quad \sigma=\sigma_2=\frac{t-t_0}{2}.$$
In addition, $\alpha_1>\alpha_2,$ $|\sigma_1|>\sqrt{2}|\sigma_2|;$ inequalities in both cases are not improvable. 
\end{remark}

We shall solve (the first) matrix differential equation in (\ref{equat}) using Proposition \ref{ut}, equalities (\ref{e}), (\ref{cs1}), (\ref{ss1}),
and Remark \ref{as} and assuming that $(4\times 4)$-matrix $\gamma(t)$ looks as in (\ref{B}) with unknown functions $x_0(t),$ $x_1(t),$ $x_2(t),$ $t\in\mathbb{R}.$ Moreover in case of matrix Lie group $G_0,$ the operation of the differential $dl_{\gamma(t)}$ of left shift $l_{\gamma(t)}$ by element $\gamma(t)$ in (\ref{equat}) is realized as multiplication by the matrix $\gamma(t)$ from the left. 

\begin{theorem}
\label{isotr}
Isotropic geodesics in $G_0$ starting at its unit have the following form
\begin{equation}
\label{x0}	
x_0(t)=[-\tau+2\sqrt{2}\arctan((\sqrt{2}-1)\tan(\tau/2)]|^{t-t_0}_{-t_0},
\end{equation}
\begin{equation}
\label{x1}
x_1(t)= \ln((1+\alpha_2)+(1-\alpha_2)\cos\tau)|^{t-t_0}_{-t_0},
\end{equation}
\begin{equation}
\label{x2}
x_2(t)=\frac{\beta_2\sin \tau}{(1+\alpha_2)+(1-\alpha_2)\cos\tau}|_{-t_0}^{t-t_0}, 
\end{equation}
$$\beta_2=\sqrt{2}[(1+\alpha_2)+(1-\alpha_2)\cos t_0].$$

In case $\varphi_1=\psi_1(0)=0,$ $\sin(0)=0$ and for $t_0=0,$  
\begin{equation}
\label{x02}
x_0(t)=-t+ 2\sqrt{2}\arctan((\sqrt{2}-1)\tan(t/2)), 
\end{equation}
\begin{equation}
\label{x12}
x_1(t)=\ln[((1+\alpha_2)+(1-\alpha_2)\cos t)/2],
\end{equation}
\begin{equation}
\label{x22}
x_2(t)=\frac{2\sqrt{2}\sin t}{(1+\alpha_2)+(1-\alpha_2)\cos t}.	
\end{equation}
\end{theorem}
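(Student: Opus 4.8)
The plan is to integrate the matrix equation $\gamma'(t)=dl_{\gamma(t)}(u(t))$ of $(\ref{equat})$ directly. As noted just before the theorem, in the matrix realization this reads $\gamma'(t)=\gamma(t)\,u(t)$, where $\gamma(t)$ is the matrix in $(\ref{B})$ with unknown entries $x_0(t),x_1(t),x_2(t)$, and, by Proposition $\ref{ut}$ with $\varphi_0=b=1$, $u(t)=(1-\sqrt2\cos\theta(t))\,e_0+\sin\theta(t)\,e_1+\sqrt2\cos\theta(t)\,e_2$, realized as a matrix via $(\ref{e})$. First I would multiply out $\gamma(t)u(t)$ and compare it entry by entry with $\gamma'(t)$. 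Since $e_0,e_1,e_2$ are strictly upper triangular and $\gamma(t)$ has the special form $(\ref{B})$, the only surviving relations sit in the last column together with the $(1,1)$-entry, and they yield the cascaded scalar system
\[
x_1'(t)=\sin\theta(t),\qquad x_0'(t)=1-\sqrt2\cos\theta(t),\qquad x_2'(t)=\sqrt2\cos\theta(t)\,e^{-x_1(t)},
\]
together with $x_0(0)=x_1(0)=x_2(0)=0$, because the geodesic issues from the unit of $G_0$.

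Next I would insert the explicit expressions $(\ref{cs1})$, $(\ref{ss1})$ for $\cos\theta$ and $\sin\theta$, taking $\alpha=\alpha_2$ and the change of variable $\tau=2\sigma=t-t_0$ of Remark $\ref{as}$, so that $dt=d\tau$ and $t=0$ corresponds to $\tau=-t_0$. Writing $A=1+\alpha_2$ and $B=1-\alpha_2$, a short computation from $\alpha_2=\frac{\sqrt2-1}{\sqrt2+1}$ gives the identities $\frac AB=\sqrt2$, $\sqrt{\alpha_2}=\sqrt2-1$, $\frac{2\sqrt{\alpha_2}}{B}=1$ and $\frac{4\sqrt2\,\sqrt{\alpha_2}}{B}=2\sqrt2$. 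The right-hand side of the $x_1$-equation is precisely the integrand of the second of the indefinite integrals computed before the theorem (their integration variable playing the role of our $\tau$), so $x_1(t)=\frac{2\sqrt{\alpha_2}}{B}\ln(A+B\cos\tau)|_{-t_0}^{t-t_0}$, which collapses to $(\ref{x1})$; the $x_0$-equation is $1$ minus $\sqrt2$ times the integrand of the first of those integrals (the antiderivative of $\cos\theta$), and after the coefficients cancel one gets $x_0(t)=[-\tau+2\sqrt2\arctan((\sqrt2-1)\tan(\tau/2))]|_{-t_0}^{t-t_0}$, that is, $(\ref{x0})$; the evaluation bars record the integration constants forced by $x_0(0)=x_1(0)=0$.

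The real obstacle is the third equation, which is non-autonomous through the factor $e^{-x_1(t)}$. The cure is to feed back the $x_1$ just found: from $(\ref{x1})$, $e^{-x_1(t)}=\dfrac{A+B\cos t_0}{A+B\cos\tau}$, while $(\ref{cs1})$ gives $\cos\theta(t)=\dfrac{B+A\cos\tau}{A+B\cos\tau}$, so
\[
x_2'(t)=\sqrt2\,(A+B\cos t_0)\,\frac{B+A\cos\tau}{(A+B\cos\tau)^2}.
\]
Here the fraction on the right is exactly the integrand of the third precomputed integral, whose antiderivative is $\dfrac{\sin\tau}{A+B\cos\tau}$; integrating and setting $\beta_2=\sqrt2\,(A+B\cos t_0)=\sqrt2\,[(1+\alpha_2)+(1-\alpha_2)\cos t_0]$ yields $(\ref{x2})$, the subtracted lower-limit term at $\tau=-t_0$ being exactly what makes $x_2(0)=0$. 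Finally I would specialize $t_0=0$: then $\tau=t$, the extra hypothesis $\psi_1(0)=0$ is automatic (it says $\sin\theta(0)=\sin0=0$), $\beta_2=2\sqrt2$, and evaluating the bars at the lower limit $\tau=0$, where $\arctan0=0$, $\sin0=0$ and $A+B=2$, reduces $(\ref{x0})$--$(\ref{x2})$ to $(\ref{x02})$--$(\ref{x22})$. I expect the only delicate points to be tracking the branch of $\arctan((\sqrt2-1)\tan(\tau/2))$ across the zeros of $\cos(\tau/2)$, and keeping the $\alpha_2$-identities straight so the constants simplify as claimed.
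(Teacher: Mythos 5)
Your proposal is correct and follows essentially the same route as the paper: read off the first-order scalar system $x_1'=\sin\theta$, $x_0'=1-\sqrt2\cos\theta$, $x_2'=\sqrt2 e^{-x_1}\cos\theta$ from the matrix equation $\gamma'=\gamma u$, then integrate using the three antiderivatives computed before the theorem (your explicit substitution of $e^{-x_1}$ into the $x_2$-equation is exactly the step the paper compresses into one sentence). The only nitpick is your claim that $e_0,e_1,e_2$ are strictly upper triangular — $e_1$ has the $(1,1)$-entry $-1$ — but you immediately account for that entry, so the argument is unaffected.
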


\begin{proof}
We obtain the following differential equations for elements of the matrix  $\gamma(t)$ with non-zero right parts
$$\gamma'_{11}(t)=\frac{-2\sqrt{\alpha_2}e^{-x_1}\sin(t-t_0)}
{(1-\alpha_2)\cos (t-t_0)+(1+\alpha_2)},$$ 
$$\gamma'_{14}(t)=x'_2(t)=e^{-x_1}\sqrt{2}\frac{(1+\alpha_2)\cos (t-t_1)+(1-\alpha_2)}{(1-\alpha_2)\cos(t-t_0)+(1+\alpha_2)},$$
$$\gamma'_{24}(t)=x'_1(t)=\frac{-2\sqrt{\alpha_2}\sin (t-t_0)}{(1-\alpha_2)\cos (t-t_0)+(1+\alpha_2)},$$
\begin{equation}
\label{x000}	 
\gamma'_{34}(t)=x'_0(t)=1-\sqrt{2}\frac{(1+\alpha_2)\cos (t-t_0)+(1-\alpha_2)}{(1-\alpha_2)\cos (t-t_0)+(1+\alpha_2)}.
\end{equation}
It is enough to solve the last three differential equations. 

Applying the indefinite integrals which we found earlier for  $\alpha=\alpha_2$, we get
\begin{equation}
\label{x03} 
x_0(t)=t - \frac{\sqrt{2}(1+\alpha_2)t}{1-\alpha_2} + \frac{4\sqrt{2}}{1-\alpha_2}\sqrt{\alpha_2}
\arctan(\sqrt{\alpha_2}\tan\tau)|^{(t-t_0)/2}_{-t_0/2},
\end{equation}
\begin{equation}
\label{x13}
x_1(t)=\frac{2\sqrt{\alpha_2}}{1-\alpha_2}\ln((1+\alpha_2)+(1-\alpha_2)
\cos\tau)
|^{t-t_0}_{-t_0},
\end{equation}
whence follow (\ref{x0}), (\ref{x1}). Now, using  (\ref{x1})
and the last integral, computed before Theorem \ref{isotr}, we obtain  (\ref{x2}).
\end{proof}

\begin{theorem}
\label{teor}
Timelike geodesics in $G_0$ starting at its unit have the following view

$$x_0(t)=-\varphi_0(t-t_0) + 2\sqrt{2}\arctan\left(\sqrt{\alpha_1}\tan\left(\frac{\sqrt{\varphi_0^2+1}
	(t-t_0)}{2}\right)\right)$$

$$-\left[\varphi_0t_0+2\sqrt{2}\arctan\left(\sqrt{\alpha_1}\tan
\left(\frac{-\sqrt{\varphi_0^2+1}t_0)}{2}\right)\right)\right],$$

$$
x_1(t)=\ln((1+\alpha_1)+(1-\alpha_1)\cos\tau)
|^{\sqrt{\varphi^2_0+1}(t-t_0)}_{-\sqrt{\varphi^2_0+1}t_0},$$

$$x_2(t)=\frac{\beta_1\sin \tau}{(1+\alpha_1)+(1-\alpha_1)\cos\tau}|_{-\sqrt{\varphi_0^2+1}t_0}^{\sqrt{\varphi_0^2+1}(t-t_0)},$$ 
$$\beta_1=\frac{\sqrt{2(\varphi^2_0-1)}}{\sqrt{\varphi^2_0+1}}
\left[(1+\alpha_1)+(1-\alpha_1)\cos\sqrt{\varphi_0^2+1}t_0\right]. $$

In case $\varphi_1=\psi_1(0)=0,$ $\sin(0)=0$ and for $t_0=0,$ 
$$x_1(t)=\ln\left[\left((1+\alpha_1)+(1-\alpha_1)\cos\left(\sqrt{\varphi_0^2+1}t\right)\right)/2\right],$$

$$x_0(t)=-\varphi_0t + 2\sqrt{2}\arctan\left(\sqrt{\alpha_1}\tan\left(\sqrt{\varphi_0^2+1}t/2\right)\right),$$
$$x_2(t)=\frac{2\sqrt{2(\varphi^2_0-1)}}{\sqrt{\varphi^2_0+1}}\left[\frac{\sin(\sqrt{\varphi_0^2+1}t)}{(1+\alpha_1)+(1-\alpha_1)\cos(\sqrt{\varphi_0^2+1}t)}\right].$$	
\end{theorem}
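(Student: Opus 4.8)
\textbf{Proof plan for Theorem~\ref{teor}.}

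The plan is to mirror exactly the computation carried out in the proof of Theorem~\ref{isotr}, replacing the isotropic parameters $(\alpha_2,\sigma_2,b=1)$ by the timelike ones $(\alpha_1,\sigma_1,b=\sqrt{\varphi_0^2-1})$ from Remark~\ref{as}. First I would write $\gamma(t)$ as the $(4\times4)$-matrix of the form \eqref{B} with unknown entries $x_0(t),x_1(t),x_2(t)$, and use \eqref{equat}, i.e.\ $\gamma'(t)=\gamma(t)u(t)$ with $u(t)$ given by Proposition~\ref{ut}. Substituting the matrix realizations \eqref{e} of $e_0,e_1,e_2$ and the explicit trigonometric expressions \eqref{cs1}, \eqref{ss1} for $\cos\theta(t)$, $\sin\theta(t)$ (now with $\alpha=\alpha_1$ and $\sigma=\sigma_1=\tfrac{\sqrt{\varphi_0^2+1}}{2}(t-t_0)$), I obtain the scalar equations for $x_0'(t)$, $x_1'(t)$, $x_2'(t)$ that are the exact analogues of the four displayed equations in the proof of Theorem~\ref{isotr}. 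The only new feature is that the overall argument of the trigonometric functions is scaled by $\sqrt{\varphi_0^2+1}$ rather than by $1$, and the coefficient $b=\sqrt{\varphi_0^2-1}$ multiplies the $e_1,e_2$-components of $u(t)$, so a factor $\sqrt{\varphi_0^2-1}/\sqrt{\varphi_0^2+1}$ (after the substitution $t\mapsto\tau$) appears in the $x_1$- and $x_2$-equations.

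Next I would integrate these three ODEs using precisely the three indefinite integrals computed just before Theorem~\ref{isotr} (formulae 5.12.1, 5.12.3, 5.12.5 of \cite{BrMarPr86}), evaluated at $\alpha=\alpha_1$. Because of the scaling in the argument, I first change variables $\tau=\sqrt{\varphi_0^2+1}\,(t-t_0)$ (so $dt=d\tau/\sqrt{\varphi_0^2+1}$), apply the tabulated antiderivatives in $\tau$, and then simplify the resulting constants. For $x_1(t)$ the $\ln$-integral gives directly the stated expression evaluated between the limits $-\sqrt{\varphi_0^2+1}\,t_0$ and $\sqrt{\varphi_0^2+1}\,(t-t_0)$; for $x_2(t)$ one uses the third integral together with the already-found $x_1(t)$ to produce the claimed rational form, the constant $\beta_1$ being exactly the value that makes $x_2(t_0)=0$ (this is where the bracketed factor $(1+\alpha_1)+(1-\alpha_1)\cos\sqrt{\varphi_0^2+1}\,t_0$ enters); for $x_0(t)$ one combines the linear term coming from $1\cdot dt$ with the $t$- and $\arctan$-terms from the second tabulated integral and checks that the coefficient of the linear part collapses to $-\varphi_0$. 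The special case $\varphi_1=\psi_1(0)=0$, $t_0=0$ then follows by direct substitution, using $\sin 0=0$ and $(1+\alpha_1)+(1-\alpha_1)=2$ to normalize the logarithm.

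I expect the main obstacle to be purely algebraic bookkeeping rather than conceptual: one must verify that the prefactors $\tfrac{1}{1-\alpha_1}$, $\tfrac{2\sqrt{\alpha_1}}{1-\alpha_1}$, $\tfrac{4\sqrt{2}\sqrt{\alpha_1}}{1-\alpha_1}$ produced by the raw integrals combine, after inserting $\alpha_1=\tfrac{\sqrt2-b/\varphi_0}{\sqrt2+b/\varphi_0}$ with $b=\sqrt{\varphi_0^2-1}$ and multiplying through by the Jacobian $1/\sqrt{\varphi_0^2+1}$ and by $b$, into the clean coefficients $2\sqrt2$, $1$, and $\tfrac{\sqrt{2(\varphi_0^2-1)}}{\sqrt{\varphi_0^2+1}}$ appearing in the statement. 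In particular one needs the identities $\sqrt{(\sqrt2-x)(\sqrt2+x)}\cdot\bigl(\tfrac{1}{\sqrt2+x}\bigr)=\ldots$ type manipulations, equivalently $1-\alpha_1=\tfrac{2b/\varphi_0}{\sqrt2+b/\varphi_0}$ and $\sqrt{\alpha_1}(1-\alpha_1)^{-1}$ simplifying correctly, together with $a^2-b^2=2\varphi_0^2-(\varphi_0^2-1)=\varphi_0^2+1$ (which is why $\sqrt{\varphi_0^2+1}$ is the right frequency) and $a=\sqrt2\,\varphi_0$ (which is why the linear coefficient in $x_0$ is $-\varphi_0$). Once these constant identifications are checked, the theorem follows; no genuinely new estimate or idea beyond the template of Theorem~\ref{isotr} is required.
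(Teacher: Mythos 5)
Your proposal is correct and follows exactly the paper's own route: the paper likewise writes down the ODEs $\gamma'_{34}=x_0'$, $\gamma'_{24}=x_1'$, $\gamma'_{14}=x_2'$ from $\gamma'(t)=\gamma(t)u(t)$ with $u(t)$ from Proposition~\ref{ut} (now with $a=\sqrt2\varphi_0$, $b=\sqrt{\varphi_0^2-1}$, $\alpha_1$, $\sigma_1$) and then states that the integration is carried out exactly as in the proof of Theorem~\ref{isotr}. Your constant bookkeeping (the identities $\tfrac{1+\alpha_1}{1-\alpha_1}=\tfrac{a}{b}$, $\tfrac{2\sqrt{\alpha_1}}{1-\alpha_1}=\tfrac{\sqrt{a^2-b^2}}{b}$, $a^2-b^2=\varphi_0^2+1$) does reduce the raw prefactors to $-\varphi_0$, $2\sqrt2$, $1$, and $\tfrac{\sqrt{2(\varphi_0^2-1)}}{\sqrt{\varphi_0^2+1}}$ as claimed.
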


\begin{proof}
We have the following differential equations 
$$\gamma'_{34}(t)=x'_0(t)=\varphi_0-\sqrt{2(\varphi_0^2-1)}
\frac{(1+\alpha_1)\cos(\sqrt{\varphi_0^2+1}(t-t_0))+(1-\alpha_1)}
{(1-\alpha_1)\cos(\sqrt{\varphi_0^2+1}(t-t_0))+(1+\alpha_1)}$$.
$$\gamma'_{24}(t)=x'_1(t)=\frac{-2\sqrt{\alpha_1(\varphi_0^2-1)}\sin (\sqrt{\varphi_0^2+1}(t-t_0))}{(1-\alpha_1)\cos (\sqrt{\varphi_0^2+1}(t-t_0))+(1+\alpha_1)},$$
$$\gamma'_{14}(t)=x'_2(t)=e^{-x_1}\sqrt{2(\varphi_0^2-1)}\frac{(1+\alpha_1)
\cos(\sqrt{\varphi_0^2+1}(t-t_0))+(1-\alpha_1)}{(1-\alpha_1)\cos(\sqrt{\varphi_0^2+1}(t-t_0))+(1+\alpha_1)}.$$

Now Theorem \ref{teor} is proved by using arguments and computations analogous to those given in the proof of Theorem \ref{isotr}.

\end{proof}

\begin{proposition}
\label{arc}	
Function $f(t)=\arctan(\beta\tan t),$ $t\in\mathbb{R},$ where $\beta>0,$ has a bounded positive derivative on the set of all numbers $t\in\mathbb{R}$ such that $t\neq \pi/2+k\pi,$ $k\in\mathbb{Z};$ $f$ is continuous and strictly increases.
\end{proposition}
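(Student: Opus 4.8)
The plan is to establish each of the four claimed properties of $f(t)=\arctan(\beta\tan t)$ in turn, working on the open set $D=\mathbb{R}\setminus\{\pi/2+k\pi:k\in\mathbb{Z}\}$, and then to argue continuity and monotonicity on all of $\mathbb{R}$. First I would differentiate directly: on $D$ the function $\tan t$ is differentiable with derivative $\sec^2 t=1+\tan^2 t$, and since $\arctan$ is differentiable everywhere with derivative $1/(1+x^2)$, the chain rule gives
\begin{equation}
\label{fprime}
f'(t)=\frac{\beta(1+\tan^2 t)}{1+\beta^2\tan^2 t}.
\end{equation}

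Next I would read off the sign and the bound from \eqref{fprime}. Since $\beta>0$, both numerator and denominator are strictly positive on $D$, so $f'(t)>0$ there. For boundedness, set $s=\tan^2 t\geq 0$ and consider $g(s)=\beta(1+s)/(1+\beta^2 s)$; this is a Möbius function of $s$, hence monotone in $s$ on $[0,\infty)$, with $g(0)=\beta$ and $\lim_{s\to\infty}g(s)=1/\beta$. Therefore $f'(t)$ always lies between $\beta$ and $1/\beta$, in particular $0<f'(t)\leq\max\{\beta,1/\beta\}$, which is the asserted bounded positive derivative.

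Finally I would handle continuity and strict monotonicity on all of $\mathbb{R}$. On each open interval $I_k=(\pi/2+(k-1)\pi,\pi/2+k\pi)$ the function $f$ is continuous (composition of continuous functions) and, by $f'>0$, strictly increasing. At a point $t_0=\pi/2+k\pi$ one checks $\tan t\to\pm\infty$ from the two sides, hence $\beta\tan t\to\pm\infty$, hence $\arctan(\beta\tan t)\to\pm\pi/2$; defining (as is standard for this branch) $f(\pi/2+k\pi)=\pi/2$ by the left limit — or simply noting that $f$ extends continuously across these points because the one-sided limits match up with the value on the next interval once one accounts for the $\pi$-periodicity pattern — one gets a continuous function on $\mathbb{R}$. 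Strict monotonicity then follows because $f$ is strictly increasing on each $I_k$ and the limiting values at the endpoints chain together monotonically: $f$ increases from $-\pi/2$ to $\pi/2$ across $I_k$ and then continues. The one point deserving care — and the only real obstacle — is precisely this matching of branches at $t=\pi/2+k\pi$: one must fix the convention that $\arctan(\beta\tan t)$ is taken on the continuous branch (equivalently, the one relevant to the integrals in \eqref{x0}, \eqref{x03}), since the principal branch of $\arctan\circ\tan$ by itself is only $\pi$-periodic and would not be monotone. With that convention made explicit, the argument is complete.
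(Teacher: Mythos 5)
Your proof is correct and follows the same route as the paper: both compute $f'(t)=\beta(1+\tan^2 t)/(1+\beta^2\tan^2 t)$ by the chain rule and read off positivity and boundedness of the derivative from this formula. You go somewhat further than the paper's proof (which stops at noting $f'(t)\to 1/\beta$ as $\tan t\to\pm\infty$ and asserts that all the statements follow) by extracting the explicit bound $0<f'(t)\le\max\{\beta,1/\beta\}$ via the monotone M\"obius function of $s=\tan^2 t$, and --- more usefully --- by making explicit the branch convention needed at $t=\pi/2+k\pi$ for $f$ to be genuinely continuous and strictly increasing on all of $\mathbb{R}$, a point the paper leaves implicit in the proposition and only pins down afterwards in the corollary containing (\ref{arct}).
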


\begin{proof}
Let $t\neq \pi/2+k\pi$. Then
$$f'(t)=\frac{\beta(1+\tan^2t)}{1+(\beta\tan t)^2}\rightarrow \frac{1}{\beta},\quad\mbox{if}\quad \tan t\rightarrow \pm\infty.$$
From here follow all statements of Proposition \ref{arc}.
\end{proof}

Proposition \ref{arc}, with addition of not large arguments, implies	

\begin{corollary}
If $\beta>0,$ then 
\begin{equation}
\label{arct}
f(\beta\tan(t+k\pi)) = \arctan(\beta\tan t) + k\pi;\quad -\frac{\pi}{2}\leq t \leq \frac{\pi}{2},\quad k\in\mathbb{Z}.	
\end{equation}		
\end{corollary}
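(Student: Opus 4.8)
The plan is to begin by clarifying what the identity actually asserts, since the composite $g(t):=\arctan(\beta\tan t)$ is only defined for $t\neq\pi/2+k\pi$ and, although it is $\pi$-periodic there (because $\tan$ is), it is \emph{not} continuous on all of $\mathbb{R}$: at each $\pi/2+k\pi$ it jumps by $\pi$, since $\lim_{t\to(\pi/2)^-}g(t)=\pi/2$ while $\lim_{t\to(\pi/2)^+}g(t)=-\pi/2$. Thus (\ref{arct}) is to be read as an identity for the continuous, strictly increasing extension $f:\mathbb{R}\to\mathbb{R}$ of $g$ whose existence is precisely what Proposition \ref{arc} (together with these one-sided limits) provides; making this explicit is the ``addition of not large arguments'' alluded to above. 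I would spell this out at the outset of the proof.

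Next I would construct $f$ concretely. On $(-\pi/2,\pi/2)$ the map $t\mapsto\beta\tan t$ is a strictly increasing bijection onto $\mathbb{R}$, so $g$ restricts there to a strictly increasing bijection of $(-\pi/2,\pi/2)$ onto $(-\pi/2,\pi/2)$ with $g(t)\to\pm\pi/2$ as $t\to\pm\pi/2$. Setting $f(\pm\pi/2):=\pm\pi/2$ extends $g$ to a continuous strictly increasing bijection of $[-\pi/2,\pi/2]$ onto itself. I would then extend to all of $\mathbb{R}$ by imposing $f(t+\pi):=f(t)+\pi$; the endpoints match, since $f(-\pi/2)+\pi=\pi/2=f(\pi/2)$, so the resulting $f$ is continuous and strictly increasing on $\mathbb{R}$ and coincides with $\arctan(\beta\tan\cdot)$ wherever the latter is defined.

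Finally I would deduce (\ref{arct}) by a one-line induction: the relation $f(t+\pi)=f(t)+\pi$ gives $f(t+k\pi)=f(t)+k\pi$ for every $k\in\mathbb{Z}$ and every $t\in\mathbb{R}$, and specializing to $t\in[-\pi/2,\pi/2]$ — using $f(t)=\arctan(\beta\tan t)$ on the open interval and $f(\pm\pi/2)=\pm\pi/2$, the limiting value of $\arctan(\beta\tan t)$, at the endpoints — yields exactly the stated formula. The only real ``obstacle'' here is bookkeeping: being careful that $f$ denotes the continuous extension and not the discontinuous composite, and verifying the matching condition $f(-\pi/2)+\pi=f(\pi/2)$ that legitimizes the piecewise construction; all of the analytic content (boundedness and positivity of $f'$, continuity, strict monotonicity on each interval) is already supplied by Proposition \ref{arc}.
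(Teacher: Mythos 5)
Your argument is essentially the one the paper intends: the paper gives no written proof of this corollary beyond the remark that Proposition \ref{arc} ``with addition of not large arguments'' implies it, and your construction of the continuous strictly increasing extension $f$ on $[-\pi/2,\pi/2]$ (with $f(\pm\pi/2)=\pm\pi/2$), followed by the gluing relation $f(t+\pi)=f(t)+\pi$ and a trivial induction on $k$, is exactly that filling-in. Your reading of the typographically garbled left-hand side $f(\beta\tan(t+k\pi))$ as $f(t+k\pi)$ for the continuous extension $f$ is also the correct interpretation. One sentence in your write-up, however, is false and should be corrected: the extension $f$ does \emph{not} ``coincide with $\arctan(\beta\tan\cdot)$ wherever the latter is defined.'' The composite $g(t)=\arctan(\beta\tan t)$ is $\pi$-periodic, whereas your $f$ satisfies $f(t+\pi)=f(t)+\pi$; hence on the branch $(k\pi-\pi/2,\,k\pi+\pi/2)$ one has $f=g+k\pi$, and $f=g$ only for $k=0$. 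If $f$ really agreed with $g$ everywhere, the right-hand side of (\ref{arct}) could carry no $+k\pi$ term --- that discrepancy \emph{is} the content of the corollary. Since your final deduction uses only $f=g$ on $(-\pi/2,\pi/2)$ together with the endpoint values $f(\pm\pi/2)=\pm\pi/2$, the slip does not damage the logic, but as written the sentence contradicts the very identity being proved and should be replaced by the statement that $f$ and $g$ have the same derivative off $\pi/2+\pi\mathbb{Z}$ and agree on the principal branch.
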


\begin{theorem}
\label{maint}
G\"odel Universe has neither isotropic nor timelike closed geodesics. 	
\end{theorem}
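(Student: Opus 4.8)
The plan is to extract from the explicit formulas of Theorems~\ref{isotr} and~\ref{teor} (together with the trivial geodesic \eqref{wl}) that along every non-constant timelike or isotropic geodesic the coordinate $x_0$ grows linearly in the parameter, while $x_1$ and $x_2$ remain bounded; since an unbounded coordinate function forces the image to be non-compact, no such geodesic can close up. First I would reduce everything to $(S_0,ds_0^2)\cong G_0$. Because $(S,ds^2)=(S_0,ds_0^2)\times(\mathbb{R},-dx_3^2)$, a geodesic of $S$ is a product of a geodesic of $S_0$ with an affine geodesic $x_3(t)=x_3(0)+vt$ of $\mathbb{R}$; if it is closed then $x_3$ is periodic, hence $v=0$, and then $\dot x_3\equiv 0$, so its $S_0$-projection is a geodesic which is again timelike (resp. isotropic) and non-constant. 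Moreover left translations \eqref{gr} are isometries of $G_0$ and shift $x_0$ only by a constant, so it suffices to show that the geodesics of $G_0$ issuing from the unit, as listed in \eqref{wl} and Theorems~\ref{isotr},~\ref{teor}, are not periodic.

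For the timelike geodesic with $\varphi_0=1$ this is immediate from \eqref{wl}, where $x_0(t)=t$. In the remaining cases I would analyse the three coordinate functions. The functions $x_1(t)$ and $x_2(t)$ are (differences of values at two endpoints of) $\ln\bigl((1+\alpha)+(1-\alpha)\cos\tau\bigr)$ and $\tfrac{\beta\sin\tau}{(1+\alpha)+(1-\alpha)\cos\tau}$ with $\alpha\in(0,1)$; since $(1+\alpha)+(1-\alpha)\cos\tau\in[2\alpha,2]$ stays in a compact subinterval of $(0,\infty)$, both functions are bounded and $2\pi$-periodic in $\tau$, hence bounded and periodic in $t$. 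The formula for $x_0(t)$, however, contains besides a bounded periodic part a genuine linear term, namely $-\varphi_0(t-t_0)$ in the timelike case and $-(t-t_0)$ in the isotropic case, together with the term $2\sqrt{2}\,\arctan\bigl(\sqrt{\alpha}\,\tan(\cdot)\bigr)$. By Proposition~\ref{arc} and the Corollary \eqref{arct}, the continuous branch of $t\mapsto\arctan\bigl(\sqrt{\alpha_1}\tan(\sqrt{\varphi_0^2+1}\,t/2)\bigr)$ increases by exactly $\pi$ when $t$ increases by the period $T=2\pi/\sqrt{\varphi_0^2+1}$ of its trigonometric argument (and by $\pi$ over the period $2\pi$ in the isotropic case), so this term equals $\tfrac{\pi}{T}\,t$ plus a bounded periodic function.

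Collecting the linear contributions, $x_0(t)=c\,t+(\text{bounded periodic function})+\text{const}$, with $c=\sqrt{2}\sqrt{\varphi_0^2+1}-\varphi_0$ for timelike geodesics and $c=\sqrt{2}-1$ for isotropic ones; since $2(\varphi_0^2+1)>\varphi_0^2$ always, $c>0$ in every case. Hence $x_0(t)\to\pm\infty$ as $t\to\pm\infty$, so the geodesic is unbounded and in particular not periodic. Combined with the reduction step, this proves that $S$ admits no closed timelike or isotropic geodesic. The one delicate point is the bookkeeping with the branch of $\arctan$: the indefinite integrals computed just before Theorem~\ref{isotr} use the principal value of $\arctan$, whereas along the smooth geodesic $x_0$ must be continuous across the poles where $\sqrt{\alpha}\tan(\cdot)=\infty$; Proposition~\ref{arc} and formula \eqref{arct} are exactly what legitimizes passing to the continuous branch and pins down its growth rate of $\pi$ per period of the relevant tangent function. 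Once this is in place, the remainder is the short computation of the constant $c$ and the elementary remark that a periodic function is bounded.
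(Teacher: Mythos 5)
Your proof is correct and follows essentially the same route as the paper's: reduce to $G_0$ by showing the $x_3$-component must be constant, then use the explicit formulas of Theorems \ref{isotr} and \ref{teor} together with Proposition \ref{arc} and (\ref{arct}) to extract the net drift of $x_0$ over one period of $(x_1,x_2)$ --- your constant $c$ times the period is exactly the paper's increment $2\pi k(\sqrt{2}-1)$, resp.\ $2\pi k\left(\sqrt{2}-\varphi_0/\sqrt{\varphi_0^2+1}\right)$. The only cosmetic difference is that you conclude via unboundedness of $x_0$, whereas the paper concludes that $x_0(t_2)\neq x_0(t_1)$ whenever $x_1$ and $x_2$ both return, which additionally shows the geodesics are simple curves.
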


\begin{proof}
Assume that there exists a closed isotropic or timelike geodesic  $\gamma=\gamma(t),$ $t\in \mathbb{R},$ in $(S,ds^2),$ in other words, on the Lie group $(G,ds^2)$. Since $ds^2$ is left-invariant, we can suppose that  $\gamma$ starts at the unit of Lie group $G,$ i.e. 
$x_0(0)=x_1(0)=x_2(0)=x_3(0)=0.$ Since the vector $e_3\in\mathfrak{g}$ commutes with all vectors from Lie algebra $\mathfrak{g},$ then in consequence of differential equations (\ref{equat1}) for $\gamma$, $\psi'_3(t)\equiv 0,$ i.e. $\psi_3(t)\equiv \varphi_3=\psi_3(0).$ By (\ref{equat}), $x_3(t)=\varphi_3\cdot t.$ This functions can take the same values for different values of $t$ only if $\varphi_3=0.$ Then 
$x_3(t)\equiv 0,$ i.e. $\gamma\in S_1,$ and $\gamma$ is a closed geodesic on the Lie group $(G_0,ds_0^2)$ starting at its unit.

It folows from Theorem \ref{isotr} (respectively, Theorem \ref{teor}) that  $x_1(t)$ and $x_2(t)$ have the least common positive period $2\pi$ 
(respectively, $2\pi/\sqrt{\varphi_0^2+1}).$ Therefore $x_1(t_2)-x_1(t_1)=0$
and $x_2(t_2)-x_2(t_1)=0$ simultaneously if and only if 
$t_2-t_1=2\pi k$ (respectively, $t_2-t_1=2\pi k/\sqrt{\varphi_0^2+1}.$)

If $k\neq 0,$ then in consequence of Proposition \ref{arc} and Theorem \ref{isotr} (respectively, Theorem \ref{teor}) 
$x_0(t_2)-x_0(t_1)=2\pi k(\sqrt{2}-1)\neq 0$ (respectively, $x_0(t_2)-x_0(t_1)=2\pi k(\sqrt{2}-\varphi_0/\sqrt{\varphi_0^2+1})\neq 0$).

Theorem \ref{maint} is proved.  	
\end{proof}

Notice that it is possible to prove Theorem \ref{maint} for the case of isotropic geodesics, if we shall use instead of (\ref{arct}) the following
proposition.

\begin{proposition}
\label{spint}
For every isotropic geodesic and arbitrary $t\in\mathbb{R},$ 
$$x_0(t+2\pi)= x_0(t)+ T,$$
where $T$ is some positive constant. 
\end{proposition}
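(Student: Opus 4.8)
The plan is to extract from the explicit formula for $x_0(t)$ in Theorem~\ref{isotr} (the isotropic case) precisely the term that accumulates a net shift over one period of the periodic part, and to show this shift is a nonzero constant independent of the base point. Recall from~(\ref{x0}) that, up to the additive constant fixed by the initial condition at $t_0$,
$$x_0(t) = -\tau + 2\sqrt{2}\,\arctan\!\bigl((\sqrt{2}-1)\tan(\tau/2)\bigr)\Big|^{\,t-t_0}_{-t_0},$$
where $\tau$ runs over the interval $[-t_0,\,t-t_0]$ and $\sqrt{\alpha_2}=\sqrt{2}-1$. The key observation is that the periodic building blocks $x_1$ and $x_2$ of an isotropic geodesic have least common period $2\pi$ in $t$ (as used in the proof of Theorem~\ref{maint}), so replacing $t$ by $t+2\pi$ replaces the upper endpoint $t-t_0$ by $t-t_0+2\pi$, i.e. shifts $\tau$ through one full period $2\pi$.

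First I would isolate the two contributions to $x_0(t+2\pi)-x_0(t)$ coming from the increment of the primitive $F(\tau):=-\tau + 2\sqrt{2}\arctan((\sqrt2-1)\tan(\tau/2))$ as its argument advances by $2\pi$. The linear term contributes $-2\pi$. For the arctangent term, the inner function $\tan(\tau/2)$ has period $2\pi$ in $\tau$, but $\arctan$ is evaluated along a branch: as $\tau/2$ increases by $\pi$, Corollary~(\ref{arct}) (with $\beta=\sqrt2-1$, $k=1$) gives $\arctan(\beta\tan(\tau/2))\mapsto \arctan(\beta\tan(\tau/2))+\pi$, so the arctangent term increments by $2\sqrt2\cdot\pi$. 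Adding these, $x_0(t+2\pi)-x_0(t) = -2\pi + 2\sqrt{2}\,\pi = 2\pi(\sqrt2-1) =: T$, which is a positive constant; crucially it does not depend on $t$ or on $t_0$, because $F$ has a primitive whose increment over a full period of the integrand depends only on the length $2\pi$ of that period (the additive constants at $\pm t_0$ cancel in the telescoping difference). I would phrase this last point carefully: $x_0(t)$ is (a fixed constant plus) $F(t-t_0)-F(-t_0)$, so $x_0(t+2\pi)-x_0(t) = F(t-t_0+2\pi)-F(t-t_0)$, and by the branch computation above this equals $T$ regardless of the value of $t-t_0$.

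The main obstacle — the point requiring genuine care rather than routine bookkeeping — is the branch issue in the $\arctan$ term: the naive formula $F(\tau)$ is only valid modulo the choice of branch of $\arctan\circ\tan$, and one must check that the natural (continuous) primitive of the integrand in~(\ref{x000}) is exactly the one that picks up $+\pi$ per half-period of the cosine, i.e. $+2\pi$ in $\tau$ per advance of $2\pi$ in $\tau$. This is precisely what Proposition~\ref{arc} and its Corollary~(\ref{arct}) are designed to supply: $f(t)=\arctan(\beta\tan t)$ extends to a continuous strictly increasing function on all of $\mathbb{R}\setminus(\tfrac\pi2+\pi\mathbb{Z})$ with $f(t+\pi)=f(t)+\pi$. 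Once that is invoked, the computation of $T$ is immediate, and since $T=2\pi(\sqrt2-1)>0$ the proposition follows. (I would also remark that this gives an alternative route to the isotropic half of Theorem~\ref{maint}: no closed isotropic geodesic can exist, since $x_0$ is strictly monotone along the sequence $t, t+2\pi, t+4\pi,\dots$, hence $\gamma(t+2\pi)\neq\gamma(t)$, while any closed geodesic would have to be $2\pi$-periodic in all coordinates by the period analysis of $x_1,x_2$.)
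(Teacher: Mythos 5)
Your proof is correct, but it takes a genuinely different route from the paper's. You work entirely with the closed-form primitive from (\ref{x0}), writing $x_0(t)=F(t-t_0)-F(-t_0)$ with $F(\tau)=-\tau+2\sqrt{2}\arctan((\sqrt{2}-1)\tan(\tau/2))$, so that $x_0(t+2\pi)-x_0(t)=F(t-t_0+2\pi)-F(t-t_0)$, and you use Corollary (\ref{arct}) to track the branch of the arctangent across a full period, obtaining the explicit value $T=2\pi(\sqrt{2}-1)>0$. The paper instead goes through the derivative: it simplifies (\ref{x000}) to $x_0'(t)=\frac{-2\sqrt{\alpha_2}\cos t}{(1-\alpha_2)\cos t+(1+\alpha_2)}$, expresses $x_0(t+2\pi)-x_0(t)$ as the integral of this $2\pi$-periodic function over one period (hence independent of $t$), and proves positivity by folding the integral over $[-\pi/2,3\pi/2]$ onto $[-\pi/2,\pi/2]$ and pairing $\tau$ with $\tau+\pi$; the exact value of $T$ appears only in the subsequent remark, obtained precisely by the arctangent computation you perform. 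One caveat worth noting: the sentence immediately preceding the proposition presents it as a means of proving the isotropic case of Theorem \ref{maint} \emph{instead of} using (\ref{arct}); since your argument relies on (\ref{arct}), it does not serve that stated purpose, although it does correctly prove the proposition as literally stated and buys you the exact constant rather than mere positivity. Your care with the branch discontinuities of $\arctan\circ\tan$ and with the cancellation of the endpoint constants at $\pm t_0$ is exactly the care required on your route; the paper's derivative-based route sidesteps both issues, since $x_0'$ is single-valued and manifestly $2\pi$-periodic.
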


\begin{proof}
For simplicity, assume that $t_0=0.$ Then on the ground of (\ref{x000}), 
$$x_0'(t)=1-\sqrt{2}\frac{(1+\alpha_2)\cos t+(1-\alpha_2)}{(1-\alpha_2)\cos t+(1+\alpha_2)}=$$
$$\frac{(1-\alpha_2)\cos t+(1+\alpha_2)-\sqrt{2}[(1+\alpha_2)\cos t+(1-\alpha_2)]}{(1-\alpha_2)\cos t+(1+\alpha_2)}=$$
$$\frac{(1-\sqrt{2})+\alpha_2(1+\sqrt{2})+\cos t[(1-\sqrt{2})-\alpha_2(1+\sqrt{2})]}{(1-\alpha_2)\cos t+(1+\alpha_2)}=$$
$$\frac{-\sqrt{\alpha_2}+\sqrt{\alpha_2}\cdot 1 +\cos t[-\sqrt{\alpha_2}-\sqrt{\alpha_2}\cdot 1]}{(1-\alpha_2)\cos t+(1+\alpha_2)}=
\frac{-2\sqrt{\alpha_2}\cos t}{(1-\alpha_2)\cos t+(1+\alpha_2)}.$$
Now, using the periodicity of the function $\cos t,$
$$x_0(t+2\pi)-x_0(t)=-2\sqrt{\alpha_2}\int_t^{t+2\pi}\frac{\cos\tau d\tau}{(1-\alpha_2)\cos\tau+(1+\alpha_2)}=$$
$$-2\sqrt{\alpha_2}\int_{-\pi/2}^{3\pi/2}\frac{\cos t dt}{(1-\alpha_2)\cos t+(1+\alpha_2)}=$$
$$-2\sqrt{\alpha_2}\int_{-\pi/2}^{\pi/2}\left[\frac{\cos t}{(1-\alpha_2)\cos t+(1+\alpha_2)}+\frac{\cos(t+\pi)}{(1-\alpha_2)
\cos(t+\pi)+(1+\alpha_2)}\right]dt=$$
$$2\sqrt{\alpha_2}\int_{-\pi/2}^{\pi/2}\left[\frac{-\cos t}{(1-\alpha_2)\cos t+(1+\alpha_2)}+\frac{\cos t}
{-(1-\alpha_2)\cos t+(1+\alpha_2)}\right]dt:= T>0.$$
\end{proof}

\begin{remark}
The proof of Proposition \ref{spint} and (\ref{x0}),  (\ref{arct}) (for $\beta=\sqrt{\alpha_2}$) imply that 
$$T=2(\sqrt{2}-1)\pi,\quad \int_0^{2\pi}\frac{\cos t dt}{(1+\alpha_2)+(1-\alpha_2)\cos t}=-\pi.$$	
\end{remark}

One can easily prove the following theorems.

\begin{theorem}
\label{gisotr}		
An isotropic geodesic in $(G,ds^2),$ starting at the unit of $G$ and not lying in the subgroup $G_0,$ can have one of two forms:
\begin{equation}
\label{spec}	
x_0(t)=x_3(t)=t,\quad x_1(t)=x_2(t)\equiv 0,\quad t\in\mathbb{R},	 
\end{equation}
\begin{equation}
\gamma(t)=(\tilde{\gamma}(t),\varphi_3t),\quad 0< |\varphi_3|<1, \quad t\in\mathbb{R},	
\end{equation}
where $\tilde{\gamma}(t)$ is obtained from formulae in Theorem \ref{teor} by replacements 
$$-\varphi_0t\rightarrow -t,\alpha_1\rightarrow \frac{\sqrt{2}-\sqrt{1-\varphi^2_3}}{\sqrt{2}+\sqrt{1-\varphi^2_3}},
\sqrt{\varphi_0^2+1}\rightarrow \sqrt{\varphi_3^2+1}, \frac{\sqrt{2(\varphi^2_0-1)}}{\sqrt{\varphi^2_0+1}}\rightarrow \frac{\sqrt{2(1-\varphi_3^2)}}{\sqrt{\varphi^2_3+1}}.$$
 \end{theorem}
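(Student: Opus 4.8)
The statement claims a classification of isotropic geodesics of $(G, ds^2)$ that start at the unit and leave the factor $G_0$. The plan is to exploit the product structure $(S, ds^2) = (S_0, ds_0^2) \times (S_1, ds_1^2)$ together with the fact that $e_3$ is central in $\mathfrak g$, exactly as in the proof of Theorem \ref{maint}. First I would note that for any geodesic the $\psi_3$-component is constant by (\ref{equat1}) (since $C_{i3}^k = 0$ for all $i,k$), hence $\psi_3(t) \equiv \varphi_3$, and by (\ref{equat}) the last coordinate evolves linearly, $x_3(t) = \varphi_3 t$. A geodesic not lying in $G_0$ means $\varphi_3 \neq 0$; after rescaling the parameter we may assume $|\varphi_3| \le 1$, and the isotropy condition $(u,u) = 0$ must be split according to the sign of the $S_0$-part of the causal character.

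\textbf{Key steps.} The causal type of the $S_0$-part: the vector $u(0)$ decomposes as $u(0) = w_0 + \varphi_3 e_3$ with $w_0 \in \mathfrak g_0$; since $(e_3,e_3) = -1$ and the metric is a product, $(u(0),u(0)) = (w_0,w_0) - \varphi_3^2 = 0$ forces $(w_0,w_0) = \varphi_3^2 \ge 0$ with $\psi_0(0) = \varphi_0 > 0$. Second, I would use the slow-optimality normalization built into (\ref{equat2}): with $\varphi_3^2 = 1$ the $S_0$-part degenerates to $w_0 = \varphi_0 e_0$ with $(w_0, w_0) = \varphi_0^2$, which after the scaling $\varphi_0 = 1$ yields the degenerate solution $x_0(t) = x_3(t) = t$, $x_1 = x_2 \equiv 0$ of (\ref{spec}) — this is precisely the world line (\ref{wl}) thickened in the $x_3$-direction. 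Third, for $0 < |\varphi_3| < 1$ the $S_0$-component $w_0$ is timelike with $(w_0,w_0) = \varphi_3^2$; writing $\tilde\gamma(t)$ for the projection to $G_0$, this is a timelike geodesic of $(G_0, ds_0^2)$, and its explicit form is governed by Theorem \ref{teor} with the timelike parameter $\varphi_0$ replaced by the value making $(w_0,w_0) = \varphi_3^2$ after renormalizing so that the $e_0$-coefficient is $1$. I would trace through Proposition \ref{ut} and Remark \ref{as}: the quantities $a = \sqrt2\,\varphi_0$, $b = \sqrt{\varphi_0^2 - 1}$, $\alpha_1$, $\sqrt{\varphi_0^2+1}$, and $\beta_1$ get replaced by their counterparts with $\varphi_0^2 - 1$ swapped for $1 - \varphi_3^2$ and $\varphi_0^2 + 1$ for $\varphi_3^2 + 1$, which is exactly the substitution displayed in the statement. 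Finally I would observe that these two families (and their rescalings of the affine parameter) exhaust all isotropic geodesics outside $G_0$, because the possible values of $(w_0,w_0) \ge 0$ are covered: $(w_0,w_0) = 0$ is impossible for a non-degenerate choice once $\varphi_3 \neq 0$ unless it falls into case (\ref{spec}) after normalization, and $(w_0,w_0) > 0$ gives the second family.

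\textbf{Main obstacle.} The genuinely delicate point is the bookkeeping of normalizations: the system (\ref{equat1})–(\ref{equat2}) fixes $(u,u) \equiv 1$ for timelike and $\equiv 0$ for isotropic geodesics, so when one restricts an isotropic geodesic of the four-dimensional group to its $G_0$-factor, the restriction is \emph{not} unit-normalized as a geodesic of $G_0$; one must rescale both the vector $w_0$ and the affine parameter $t$ to match the hypotheses of Theorem \ref{teor}, and then undo that rescaling. Keeping track of how the rescaling factor propagates through $\alpha_1$, $\sqrt{\varphi_0^2+1}$, and $\beta_1$ — and verifying that it produces precisely the four replacements listed — is where all the care goes; everything else is a direct appeal to the product structure and to the already-proved Theorems \ref{isotr} and \ref{teor}. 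Since the author explicitly writes ``one can easily prove the following theorems,'' I expect the intended argument is exactly this substitution-and-rescaling, carried out without incident.
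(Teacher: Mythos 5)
Your proposal is correct and matches the argument the paper leaves implicit (the author gives no proof beyond ``one can easily prove''): since $e_3$ is central, $\psi_3\equiv\varphi_3$ and $x_3(t)=\varphi_3 t$, while the $G_0$-projection obeys the system of Proposition \ref{ut} with $a=\sqrt{2}\,\psi_0=\sqrt{2}$ and $b=\sqrt{\psi_1^2+\psi_2^2}=\sqrt{1-\varphi_3^2}$, and substituting these into the general integrals (equivalently, your rescale-to-unit-speed-and-undo bookkeeping with $\varphi_0=1/|\varphi_3|$) produces exactly the four stated replacements. One small imprecision: the bound $|\varphi_3|\le 1$ does not come from ``rescaling the parameter'' but is forced by the isotropy condition $\psi_1^2+\psi_2^2=1-\varphi_3^2\ge 0$ under the paper's normalization $\psi_0\equiv 1$, with equality $|\varphi_3|=1$ giving the degenerate case (\ref{spec}); your closing exhaustion remark about ``$(w_0,w_0)=0$'' conflates this with a null $G_0$-component, which cannot occur because $(w_0,w_0)=\varphi_3^2>0$ whenever $\varphi_3\neq 0$.
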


\begin{theorem}
\label{gtl}
A timelike geodesic in $(G,ds^2),$ starting at the unit of $G$ and not lying in the subgroup $G_0,$ can have one of two forms:		
\begin{equation}
\label{spec1}	
x_0(t)=\varphi_0t,\quad x_3(t)=\sqrt{\varphi_0^2-1}t,\quad x_1(t)=x_2(t)\equiv 0,\quad t\in\mathbb{R},\quad \varphi_0>1, 
\end{equation}
\begin{equation}
\gamma(t)=(\tilde{\gamma}(t),\varphi_3t),\quad 0< |\varphi_3|< \sqrt{\varphi_0^2-1}, \quad t\in\mathbb{R},	
\end{equation}
where $\tilde{\gamma}(t)$ is obtained from formalae in Theorem \ref{teor} by replacements
$$\alpha_1\rightarrow \frac{\sqrt{2}-\sqrt{1-(1+\varphi^2_3)/\varphi_0^2}}{\sqrt{2}+
	\sqrt{1-(1+\varphi^2_3)/\varphi_0^2}},\quad
\sqrt{\varphi_0^2+1}\rightarrow \sqrt{\varphi_0^2+1+\varphi^2_3},$$ $$\frac{\sqrt{2(\varphi^2_0-1)}}{\sqrt{\varphi^2_0+1}}\rightarrow \frac{\sqrt{2(\varphi_0^2-(1+\varphi^2_3))}}{\sqrt{\varphi_0^2+1+\varphi^2_3}}.$$
\end{theorem}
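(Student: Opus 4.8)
The plan is to reduce the four-dimensional problem to the three-dimensional one already solved, using that $G\cong G_0\times(\mathbb{R},+)$ and that the generator $e_3$ is central. First I would record that, since $e_3$ commutes with every element of $\mathfrak{g}$, the equations (\ref{equat1}) force $\psi_3(t)\equiv\varphi_3:=\psi_3(0)$, exactly as in the proof of Theorem \ref{maint}; likewise $e_0$ is central, so $\psi_0(t)\equiv\varphi_0>0$. Because the only nonzero structure constants carrying indices $1,2$ are still $C^0_{12}=\sqrt2$, $C^2_{12}=-1$ (equation (\ref{str})), the equations for $\psi_1,\psi_2$ are verbatim the system (\ref{dif}); hence Proposition \ref{ut} and all the computations preceding Theorem \ref{teor} apply once the correct value of $b$ is identified. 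For a timelike geodesic the normalization $(u(t),u(t))\equiv(u(0),u(0))=1$ now reads $\varphi_0^2-\psi_1^2-\psi_2^2-\varphi_3^2=1$, so that $\psi_1(t)^2+\psi_2(t)^2\equiv b^2$ with $b^2=\varphi_0^2-1-\varphi_3^2\geq0$.

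Next I would dispose of the last coordinate: the projection of $\gamma$ to the factor $(\mathbb{R},-dx_3^2)$ is an affine function through the origin, i.e.\ $x_3(t)=\varphi_3 t$. The hypothesis that $\gamma$ does not lie in $G_0$ is precisely $\varphi_3\neq0$; combined with $b^2\geq0$ this yields $\varphi_0^2\geq 1+\varphi_3^2>1$, hence $\varphi_0>1$.

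Then the argument splits into the two announced cases. If $b=0$, then $\psi_1\equiv\psi_2\equiv0$, so $u(t)\equiv\varphi_0 e_0-\varphi_3 e_3$ with $\varphi_3^2=\varphi_0^2-1$, and $\gamma(t)=\exp(tu(t))$ is the one-parameter subgroup $(\varphi_0 t,0,0,\sqrt{\varphi_0^2-1}\,t)$ (up to the sign of $\varphi_3$), which is (\ref{spec1}). If $b>0$, that is $0<|\varphi_3|<\sqrt{\varphi_0^2-1}$, then the $G_0$-components $(x_0,x_1,x_2)$ satisfy exactly the matrix equation $\gamma'=\gamma u$ treated in the proof of Theorem \ref{teor}, the sole difference being that the parameter $b^2=\varphi_0^2-1$ is replaced throughout by $\varphi_0^2-1-\varphi_3^2$; equivalently $a=\sqrt2\varphi_0$ is unchanged while $a^2-b^2=\varphi_0^2+1$ becomes $\varphi_0^2+1+\varphi_3^2$. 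Propagating this substitution through the definitions of $\alpha_1$, $\sigma_1$ and the constant in front of $\sin\tau$ (the quantities displayed in Remark \ref{as} and Theorem \ref{teor}) produces precisely the replacements listed in the statement; in particular the coefficient $\varphi_0$ of the linear term of $x_0$ is untouched, since it is the $e_0$-coefficient of $u(t)$, which does not involve $\varphi_3$. Appending $x_3(t)=\varphi_3 t$ to the resulting $\tilde\gamma(t)$ closes this case and the theorem.

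The main point to be careful about is the claim that the $G_0$-block of the full system genuinely decouples from $\varphi_3$ — that adjoining the central direction $e_3$ changes nothing except the admissible value of $b^2$. This is bookkeeping with the structure constants (\ref{str}) rather than new analysis, but it is the step one must verify; once it is done, Proposition \ref{ut} and Theorem \ref{teor} are reused with no further work.
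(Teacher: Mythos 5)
Your proof is correct and follows the route the paper clearly intends (the paper itself offers no proof, only the remark that the theorem is easily proved): centrality of $e_3$ gives $\psi_3\equiv\varphi_3$ and a linear $x_3(t)$, the $\psi_1,\psi_2$ system is unchanged, and the normalization $(u,u)=1$ shifts $b^2$ from $\varphi_0^2-1$ to $\varphi_0^2-1-\varphi_3^2$ while $a=\sqrt{2}\varphi_0$ is unchanged, which propagates to exactly the stated replacements for $\alpha_1$, $\sqrt{a^2-b^2}$ and the amplitude, with the degenerate case $b=0$ giving the one-parameter subgroup (\ref{spec1}). The one step you rightly flag — that the $G_0$-block decouples because no structure constant involves the index $3$ — is verified correctly, so the argument is complete.
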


\section{Concluding remarks}
\label{last}

\begin{remark}
The absence of closed timelike geodesics in G\"odel Universe can be deduced
conceptually from the absence of closed isotropic geodesic, without application of Theorem \ref{teor}.  
\end{remark}

\begin{theorem}
\label{ch}
If in the first equality from formula (35) in \cite{ChWr} and in formulae 
(26), (30), (33) from \cite{ChWr} to set $D=\sqrt{2},$ $B=1,$ 
(resp., $D=\sqrt{2}\varphi_0,$ $B=\sqrt{\varphi_0^2-1}$), 
and to take $c_0$, $c_1,$ $c_2$ in such a way that the corresponding curve starts at the unit from $G_0,$ then one obtains the geodesics from Theorem 
\ref{isotr} (resp., Theorem \ref{teor}).  	
\end{theorem}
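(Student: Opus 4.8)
The plan is to verify Theorem \ref{ch} by a direct comparison of two explicit parametrizations of the same geodesics: those coming from \cite{ChWr}, obtained by the classical Christoffel-symbol method, and those of Theorems \ref{isotr} and \ref{teor}, obtained by the Pontryagin-principle method. First I would recall from \cite{ChWr} the cited formulae (26), (30), (33) and the first equality of (35), writing out the differential equations or closed-form expressions they give for $x_0(t)$, $x_1(t)$, $x_2(t)$ (in Chandrasekhar--Wright's notation, with their constants $D$, $B$, and the integration constants $c_0,c_1,c_2$). The normalization $D=\sqrt2$, $B=1$ should correspond to the isotropic case, and $D=\sqrt2\,\varphi_0$, $B=\sqrt{\varphi_0^2-1}$ to the timelike case; one checks this by comparing the role of $D$ and $B$ in \cite{ChWr} with our $a=\sqrt2\,\varphi_0$ and $b$ from Proposition \ref{ut}, so that $a^2-b^2$, which governs the frequency $\sqrt{\varphi_0^2+1}$ of the oscillation, matches $D^2-B^2$ in the Chandrasekhar--Wright equations.

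The key steps, in order, are: (i) substitute the prescribed values of $D$ and $B$ into the \cite{ChWr} formulae and simplify, reducing each coordinate function to an expression in elementary functions of $t$; (ii) fix the three integration constants $c_0,c_1,c_2$ by imposing that the resulting curve passes through the unit of $G_0$ at $t=0$, i.e. $x_0(0)=x_1(0)=x_2(0)=0$ (and, if needed, matching one derivative to pin down the remaining freedom); (iii) compare the outcome term-by-term with formulae (\ref{x0})--(\ref{x2}) of Theorem \ref{isotr} (resp. with the formulae of Theorem \ref{teor}), using the abbreviations $\alpha_2=(\sqrt2-1)/(\sqrt2+1)$, $\alpha_1$ and $\beta_i$ from Remark \ref{as}. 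The identifications $\cos\theta(t)$, $\sin\theta(t)$ in the forms (\ref{cs1}), (\ref{ss1}) should serve as the bridge: the half-angle substitution $\tan(\theta/2)=-\sqrt{\alpha}\tan\sigma$ is exactly the change of variables that converts the Chandrasekhar--Wright trigonometric solution into ours, and the integrals computed just before Theorem \ref{isotr} produce precisely the $\arctan(\sqrt\alpha\tan(\cdot))$ and $\ln((1+\alpha)+(1-\alpha)\cos(\cdot))$ primitives appearing on both sides.

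The main obstacle I expect is purely bookkeeping rather than conceptual: reconciling the two notational systems. Chandrasekhar and Wright use a different coordinate chart (their "cylindrical" type coordinates, related to G\"odel's $(r,\varphi,t)$) and different normalizations of the affine parameter and of the metric constant $a$; moreover their solution may be presented implicitly or with a different choice of branch for the inverse tangent. So the delicate part is to produce the explicit dictionary between their $(D,B,c_0,c_1,c_2)$ and our $(\varphi_0,\alpha_i,\beta_i,t_0)$, and to check that the branch conventions agree on all of $\mathbb{R}$ (here the Corollary to Proposition \ref{arc}, formula (\ref{arct}), is what lets one extend the elementary-function formula past the points $t=\pi/2+k\pi$ consistently). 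Once the dictionary is in place, the equality of the two families of curves follows because both satisfy the same second-order geodesic equations with the same initial data, so uniqueness for ordinary differential equations finishes the argument; alternatively one simply observes that after the substitutions the right-hand sides coincide identically, which is the ``one can easily prove'' flavor the statement advertises.
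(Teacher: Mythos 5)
Your proposal matches the paper's proof in substance: the paper likewise proceeds by substituting the indicated values of $D$ and $B$ into the Chandrasekhar--Wright formulae, fixing the constants so the curve starts at the unit, and then identifying the two parametrizations via the trigonometric identities passing between the $2\sigma$ and $\sigma$ forms (the paper's only extra explicit remark is that dividing the argument of $\ln(\cdot)$ in $x_1(t)$ by $2$ leaves the function unchanged, since the additive constant cancels in the difference of endpoint values). Your closing appeal to ODE uniqueness is a harmless alternative finish, but the core argument is the same direct term-by-term comparison.
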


\begin{proof}
The function $x_1(t)$ from Theorem \ref{isotr} (resp., Theorem \ref{teor}) 
will not change if we divide the argument in $\ln (\cdot)$ by 2. Then the
application of indicated values of $D, B$, formulae from \cite{ChWr}, and known trigonometric relations for the transiti\-ons from $2\sigma$ to $\sigma$ and back completes the proof of Theorem \ref{ch}.
\end{proof}	

\begin{remark}
In \cite{ChWr}, the authors admit the cases $B=0$ (world lines of matter) and $B< 0.$ Then, according to (37) in \cite{ChWr}, $(\sqrt{2}-1)^2\leq \alpha \leq (\sqrt{2}+1)^2.$	
\end{remark}	

The space $S_0$ is obtained from the space $R$ with constant positive curvature and signature $(+,-,-)$ by expanding in ratio $\sqrt{2}:1$ in direction of the system $P$ of oriented timelike Clifford parallels. This definition of $S_0$ also leads to an elegant presentation of its isometry group \cite{God1949}. Let us explain that the system $P$ consists of world lines of matter in $S_0$ and the group $G_0$ is the subgroup in the isometry group $G_R$ of the space $R$, transforming the system $P$ into itself. We need to omit here the given in  \cite{God1949} description of Clifford parallels, the space $R$ and the group  $G_R$ by means of hyperbolic quaternions and fundamental Klein quadric.

The mentioned expanding coefficient $\mu=\sqrt{2}$ is chosen in order to get the most simple form of the Ricci tensor indicated above and in \cite{God1949}.

Section 5.7 in \cite{HockEll} is called ''G\"odel's universe''.
The orbits (circles) of one-parameter group of isometric rotations in $S_0$ around the axis $r=0$ for points of the plane $t=0$ in coordinates $(r,\varphi,t)$ are depicted on Figure 31.  For $0< r < \ln (\sqrt{2}+1)$ they are spacelike, for $r=\ln (\sqrt{2}+1)$ are isotropic, for $r> \ln (\sqrt{2}+1)$ are timelike. Let us note that $\sqrt{2}+1=1/\sqrt{\alpha_2}.$

At the end of section 5.7 is given also the verbal description of timelike and isotropic geodesics. Let us repeat literally the last description:
''The null geodesics from a point $p$ on the axis of coordinates (Figure 31) diverge from the axis initially, reach a caustic at $r=\ln (\sqrt{2}+1)$, and then reconverge to a point $p'$ on the axis''. That is,
{\it there is no closed isotropic geodesics}. Moreover, there is no 
reference in \cite{HockEll} to a source of this statement and geodesics really are not studied in section 5.7. In the last sentence of this section
is given only references to \cite{God1949}, \cite{Kundt}. There is no reference to \cite{ChWr} in \cite{HockEll}. In \cite{HockEll} is said that ''the solution is not very physical''.

An isotropic geodesic of $S_0$ is presented in \cite{ChWr} as a helix on the cylinder $r=\ln(\sqrt{2}+1)/2$; this helix is an orbit of some
one-parameter isometry group of $S_0$.

Ellis gives in \cite{Ellis} a deep analysis of articles \cite{God1949}, \cite{God1949a}, \cite{God1950} and their influence onto later investigations in General Relativity Theory. Let us give some citations on  \cite{God1949}:

''Curiously, the beginning of the modern studies of singularities in general relativity in many ways had its seeds in the presentation by Kurt G\"odel  (1949) of an exact solution of Einstein's equations for pressure-free matter, which could be thought of as a singularity-free, rotating but non-expanding cosmological model'' (p. 34).

''The electric part of the Weyl tensor (for G\"odel Universe) is non-zero and is given by (2.3) but the magnetic part of the Weyl tensor is zero. Because of rotational symmetry, the Weyl tensor is Petrov type $D$'' (p. 36). (This is consistent with \cite{Steph}).

''G\"odel did not describe the geodesic properties of this space-time
but may have investigated them. Later investigations by Kundt \cite{Kundt}, Chandrasekhar and Wright \cite{ChWr} explicitely showed that there is no closed timelike geodesics'' (p. 37). 

Note that in \cite{Ellis} there is nothing about connection of these works with isotropic geodesics but on p. 38 with reference to \cite{HockEll} is mentioned on these geodesics.

In abstract to \cite{Kundt}, the author (then a student of Pascual Jordan) writes that in his paper are integrated differential equations of geodesics in some inertial system resting relative to a world line of matter.

We could not deduce on the base of formulas in \cite{Kundt} the judgement that in \cite{Kundt} was proved the absence of closed timelike geodesics.

Kundt in paper \cite{Kundt} accepts (\ref{lig1}) as initial linear element for $a=1$ and obtains after transformation of coordinates  
$$x_1=-\ln y,\quad x_2=\sqrt{2}x,\quad 0< y< \infty,\quad -\infty < x <\infty,\quad x_0=t,\quad x_3=z:$$
\begin{equation}
\label{lob}
ds^2=\left(dt+\frac{\sqrt{2}dx}{y}\right)^2-\left(\frac{dx^2+dy^2}{y^2}
+dz^2\right).
\end{equation}
One can easily check that both the temporal and the space components of this metric are invariant relative to action of the Lie group $G$.  

As a corollary of (\ref{lob}), the linear element $ds^2_0$ in (\ref{lin}) can be written in the form
\begin{equation}
\label{lig2}
ds_0^2=\left(dt+\frac{\sqrt{2}dx}{y}\right)^2-
\left(\frac{dx^2+dy^2}{y^2}\right).	  
\end{equation}

It is known that the space components of the metric (\ref{lig2}) is the Lobachevsky plane with Gaussian curvature $-1$ in the Poincare model at
the upper half-plane. 

Once more, both the temporal and the space components of this metric are invariant relative to action of the Lie group $G_0$.

\begin{remark}
More precisely, on the space component of (\ref{lig2}) acts simply transitively by isometries the Lie group $G_2.$ It is worthwhile to note that $G_2$ is isomorphic to the group of preserving orientation affine transformations of the straight line $\mathbb{R}.$ Natural generalizations of Lie group Ли $G_2$ are connected $(n+1)$-dimensional Lie groups  $G_{n+1},$ acting on Euclidean spaces $\mathbb{R}^n,$ $n\geq 1,$ by parallel translations and proper similarities. Sometimes they are called the basic affine groups. J. Milnor proved in \cite{Miln} that every left-invariant Riemannian metric on $G_{n+1}$ has constant negative sectional curvature, that is presents $(n+1)$-dimensional Lobachevsky space. It is connected with the fact that any connected $(n+1)$-dimensional Lie group,  locally isomorphic to $G_{n+1},$ is simply connected. Groups $G_{n+1},$ $n\geq 1,$ are all connected non-commutative Lie groups admitting no left-invariant sub-Riemannian metrics.
\end{remark}

A.V.~Levichev investigated in \cite{Lev1990} conditions of timelike and isotropic completeness, timelike convergence,  and weak energetic condition for all left-invariant Lorentz metrics on the group $(\mathbb{R},+)\times G_2\times (\mathbb{R},+)$. For all metrics are composed systems of differential equations of the first order for geodesics starting at the unit. Results are proved in Theorem 1 and summed up in Table from  \cite{Lev1990}. Table shows that only metrics of system 2b), which includes the metric (\ref{lig}), satisfy all three conditions.

Also Levichev studied the causal structure of those considered Lorentz manifolds which are geodesic complete. In Theorem 2, with reference to Theorem 2 from \cite{Lev19900}, it is established that the Lorentz manifolds from the system 2b) are {\it totally vicious}, that is any two points of the manifold can be connected by a timelike curve. In \cite{Lev1992} are also presented the other such homogeneous Lorentz manifold.

This is a total contrast to behavior of geodesics in G\"odel Universe. On the base of Theorems \ref{isotr}, \ref{teor}, it is not difficult to obtain the following (rough) estimates.

\begin{proposition}
All isotropic and timelike geodesics $\gamma(t)=(x_0(t),x_1(t),x_2(t))$ from
Theorem \ref{isotr} and \ref{teor} are contained in the cylinder $$\mathbb{R}\times D,\quad\mbox{где}\quad D=[-1,03; 0,7]\times [-(2+\sqrt{2}), 2+\sqrt{2}].$$ 
All isotropic and timelike geodesics of the form  $$\gamma(t)=(x_0(t),x_3(t),x_1(t),x_2(t))$$ in $(G,ds^2),$ starting at the unit, are contained in the set $F\times D$, where  
$$F=\{(x_0,x_3)\in \mathbb{R}^2: -x_0\leq x_3\leq x_0,\quad  x_0\geq 0\}.$$   		
\end{proposition}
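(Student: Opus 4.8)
The plan is to read the stated bounds directly off the closed-form expressions of Theorems~\ref{isotr} and \ref{teor} (and of Theorems~\ref{gisotr}, \ref{gtl} in the four-dimensional case), after isolating the few elementary ``building blocks'' out of which all these coordinate functions are assembled and controlling their ranges once and for all. I would first record a uniform two-sided bound on the parameter $\alpha$: by Remark~\ref{as}, $\alpha=\alpha_2=(\sqrt2-1)/(\sqrt2+1)$ for isotropic and $\alpha=\alpha_1\in(\alpha_2,1)$ for timelike geodesics, while in Theorems~\ref{gisotr}, \ref{gtl} each substituted value has the form $(\sqrt2-s)/(\sqrt2+s)$ with $s\in(0,1)$; hence $\alpha_2\le\alpha<1$ in all cases. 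Two consequences are what we need: (a) for every phase $\phi$ one has $(1+\alpha)+(1-\alpha)\cos\phi\in[2\alpha,2]\subseteq[2\alpha_2,2]$, so $\ln\!\big((1+\alpha)+(1-\alpha)\cos\phi\big)\in[\ln(2\alpha_2),\ln2]$; and (b) the function $h_\alpha(\phi)=\dfrac{\sin\phi}{(1+\alpha)+(1-\alpha)\cos\phi}$, whose derivative $\dfrac{(1-\alpha)+(1+\alpha)\cos\phi}{[(1+\alpha)+(1-\alpha)\cos\phi]^2}$ was already computed just before Theorem~\ref{isotr}, attains its extrema where $\cos\phi=-(1-\alpha)/(1+\alpha)$ and there equals $\pm\dfrac1{2\sqrt\alpha}$, so $|h_\alpha|\le\dfrac1{2\sqrt{\alpha_2}}=\dfrac{\sqrt2+1}{2}$.

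With these in hand the bounds on $x_1$ and $x_2$ for the geodesics of $G_0$ are essentially immediate, and since no constraint on $x_0$ is claimed in the first part of the Proposition this already yields containment in the cylinder $\mathbb R\times D$. Indeed $x_1(t)$ is in every case a difference of two values of the function in (a), which together with $\sqrt{\alpha_2}=\sqrt2-1$ confines it to the first factor of $D$; and $x_2(t)$ has the shape $\beta\,h_\alpha(\tau)-\beta\,h_\alpha(\tau_0)$ with $\beta$ a fixed multiple of $(1+\alpha)+(1-\alpha)\cos\tau_0$, the crucial point being that the second term \emph{telescopes} --- the denominator of $h_\alpha(\tau_0)$ cancels the factor hidden in $\beta$, leaving a bounded multiple of $\sin\tau_0$ --- while $\beta\,h_\alpha(\tau)$ is controlled by (a) and (b); using $\sqrt{\alpha_2}=\sqrt2-1$ once more produces the bound $|x_2|\le 2+\sqrt2$, i.e.\ the second factor of $D$. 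For the four-dimensional geodesics the pair $(x_1,x_2)$ is precisely that of the curve $\tilde\gamma$ obtained from Theorem~\ref{teor} with substituted parameters still satisfying $\alpha\in(\alpha_2,1)$, so the same estimates apply; and the special geodesics (\ref{spec}), (\ref{spec1}) have $x_1\equiv x_2\equiv 0$, trivially in $D$.

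It remains to place $(x_0,x_3)$ in $F$ for a future-directed geodesic of $(G,ds^2)$ issuing from the unit. Since $e_3$ is central, $\psi_3$ is constant, so $x_3(t)=\varphi_3 t$ exactly, as already observed in the proof of Theorem~\ref{maint}; for the special geodesics (\ref{spec}), (\ref{spec1}) the coordinates are linear with $0\le|x_3(t)|\le x_0(t)$ for $t\ge0$, hence $(x_0,x_3)\in F$. For the remaining geodesics I would start from the $x_0$-formula of Theorem~\ref{teor} after the substitutions of Theorems~\ref{gisotr}, \ref{gtl}, which has the form $x_0(t)=-\varphi_0 t+2\sqrt2\,\arctan\!\big(\sqrt\alpha\,\tan(\omega t/2)\big)+\mathrm{const}$, and apply Proposition~\ref{arc} with its Corollary (\ref{arct}) to keep $\arctan(\sqrt\alpha\tan(\omega t/2))$ within a fixed bounded distance of $\omega t/2$; then $x_0(t)$ stays within a bounded distance of the line $(\sqrt2\,\omega-\varphi_0)\,t$, whose slope exceeds $|\varphi_3|$ since on squaring the inequality $\sqrt2\,\omega-\varphi_0>|\varphi_3|$ reduces to $(\varphi_0-|\varphi_3|)^2+2>0$. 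The goal is then to deduce $x_0(t)\ge0$ and $-x_0(t)\le x_3(t)\le x_0(t)$ for $t\ge0$. (Equivalently, the Killing field $\partial_{x_0}$ furnishes the conserved quantity $\dot x_0+e^{x_1}\dot x_2=\varphi_0$, which with (\ref{thet}) gives the transparent identity $x_0(t)=-\varphi_0 t-\sqrt2\,(\theta(t)-\theta(0))$, where $\theta$ is monotone by Corollary~\ref{der} and, by (\ref{the}), decreases by $2\pi$ over each period $2\pi/\omega$.)

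The main obstacle is precisely this last deduction. A crude bound on $\dot x_0$ is not enough --- $\dot x_0(0)$ need not be nonnegative --- so one must exploit the monotonicity of $\theta$ together with the exact period of the oscillation, concretely the value of $\int_0^{2\pi/\omega}\cos\theta\,ds$ over one period, to upgrade ``$x_0$ oscillates boundedly about a line of the right slope'' to the pointwise cone inclusion $(x_0(t),x_3(t))\in F$ uniformly in $t\ge0$ and in the parameters $\varphi_0,\varphi_3$. Once that is settled, pinning down the precise endpoints of $D$ is just a matter of optimizing the elementary expressions of the first paragraph over $\alpha\in[\alpha_2,1)$ and over the phase $\tau_0$, which is routine.
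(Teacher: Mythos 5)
Your decomposition into building blocks is the natural one, and several of your intermediate identities are correct: the telescoping $x_2(t)=\sqrt2\,N(\tau_0)\,h_\alpha(\tau)-\sqrt2\sin\tau_0$ with $N(\phi)=(1+\alpha)+(1-\alpha)\cos\phi$, $\tau=t-t_0$, $\tau_0=-t_0$; the extremal value $|h_\alpha|\le 1/(2\sqrt{\alpha})$; and the conserved-quantity identity $x_0(t)=-\varphi_0 t-\sqrt2\,(\theta(t)-\theta(0))$. (The paper offers no proof of this Proposition --- only the remark that the estimates are ``not difficult'' --- so there is nothing to compare your route against.) The difficulty is that at each point where you declare the numerical conclusion ``immediate'' or ``routine'', the numbers do not come out, and in fact cannot. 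For $x_1$: a difference of two values of $\ln N$ lies in $[\ln\alpha,\,-\ln\alpha]$, which for $\alpha=\alpha_2=3-2\sqrt2$ is approximately $[-1.76,\,1.76]$, not $[-1.03,\,0.7]$; concretely, the isotropic geodesic of (\ref{x12}) attains $x_1(\pi)=\ln\alpha_2\approx -1.76$, already outside the claimed first factor of $D$. (The interval $[-1.03,\,0.7]$ is essentially $[\ln(2\alpha_2),\ln 2]$, i.e.\ the range of a \emph{single} value of $\ln N$, not of the difference.) For $x_2$: your telescoped form gives $\sup_\tau|x_2|=\tfrac{\sqrt2\,N(\tau_0)}{2\sqrt{\alpha}}+\sqrt2\,|\sin\tau_0|$, whose maximum over $\tau_0$ at $\alpha=\alpha_2$ equals $4$ (attained at $\tau_0=\pi/4$, where $N(\tau_0)=3(2-\sqrt2)$), exceeding $2+\sqrt2\approx 3.41$; the bound $2+\sqrt2$ is correct only for the subfamily with $\sin\tau_0=0$, i.e.\ $\psi_1(0)=0$. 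So no optimization ``over $\alpha$ and the phase $\tau_0$'' will recover the stated rectangle $D$; the honest rectangle produced by your own estimates is about $[\ln\alpha_2,-\ln\alpha_2]\times[-4,4]$.

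The cone inclusion that you flag as the main obstacle also fails pointwise, so it cannot be rescued by a cleverer use of the period of $\theta$: by (\ref{x000}) one has $x_0'(0)=1-\sqrt2<0$ for the isotropic geodesic with $t_0=0$, hence $x_0(t)<0$ for small $t>0$ and $(x_0(t),x_3(t))\notin F$. (This is not a contradiction with causality: $x_0$ is not a time function on the G\"odel universe, which is the whole point of the model.) What your identity for $x_0$ does prove --- and this is worth keeping --- is that $x_0(t)$ stays within a uniformly bounded distance of the line of slope $\sqrt2\,\omega-\varphi_0$, and your verification that this slope exceeds $|\varphi_3|$ is correct; so the geodesic lies in a \emph{translate} of a cone slightly narrower than $F$, and eventually enters $F$, but is not contained in $F$ for all $t\ge 0$. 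In short, your strategy is the right one and would prove a corrected statement (with $D$ enlarged and $F$ replaced by a shifted cone, or with the hypotheses restricted to $\psi_1(0)=0$), but as written the proof asserts containments that your own inequalities do not yield and that explicit geodesics from Theorems \ref{isotr} and \ref{teor} violate.
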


Let us quote parts of abstracts to papers \cite{Andr17}, \cite{Andr21} on geodesics in G\"odel Universe, including spacelike. From the first abstract: ''The system of equations, which determines geodesic lines, is solved in elementary functions''. From the second one: ''The first four integrals of the geodesic lines system are given. The author clarifies the physical meaning of integration constants. Resulting from the complete integration of the geodesic lines system of equations in the Cartesian coordinates system, five families of geodesic lines are identified.'' 

In both papers in the search for geodesics is applied the classical method.
In References to papers are absent \cite{Kundt}, \cite{ChWr}, \cite{HockEll}. In References to the first paper is also absent \cite{God1949}. Analogs of Theorem \ref{ch} are not applicable to  \cite{Andr17}, \cite{Andr21}. In \cite{Andr17}, \cite{Andr21} there are no definite statements on existence or absence of closed timelike or isotropic geodesics for general cases.


\begin{thebibliography}{99}
	
\bibitem{God1949}
{\sl G\"odel~K.} An example of a new type of cosmological solutions of Einstein's field equations of gravitation.~ Rev. Mod. Phys., 21:3 (1949), 447--450.


\bibitem{God1949a}
{\sl G\"odel~K.} A remark about the relashionship between Relativity theory and idealistic Philosophy. In: A.P.~Schilpp (Ed.) Albert Einstein: Philosopher-Scientist. Volume VII in the Library of Living Philosophers. MJF Books, New York, 2001. P. 555--562.

\bibitem{Einst}
{\sl Einstein~A.} Reply to criticisms. In: A.P.~Schilpp (Ed.) Albert Einstein: Philosopher-Scientist. Volume VII in the Library of Living Philosophers. MJF Books, New York, 2001. P. 663--688.

\bibitem{God1950}
{\sl G\"odel~K.} Rotating universes in general relativity~ Proc. Int. Congr. Math. Cambr. Mass., 1 (1950), 175--181.

\bibitem{OzsSchuck}
{\sl Ozsv\'ath~I., Sch\"ucking~E.} Approaches to G\"odel rotating universe~ Class. Quantum Grav. 18(2001), 2243--2252.


\bibitem{GodColl}
{\sl G\"odel~K.} Collected Works. Volume III (Unpublished essay and lectures). Feferman~S., Dawson~J.W. Jr., Goldfarb~W., Parsons~Ch., Soloway~R.M. (Eds.) New York, Oxford: Oxford University Press, 1995.

\bibitem{Gam1946}
{\sl Gamov~G.} Rotating Universe?~ Nature, 158 (1946), 549--549.

\bibitem{Gam1952}
{\sl Gamov~G.} The role of turbulence in the evolution of the Universe.~ Phys. Rev. 86 (1952), 251--251.

\bibitem{ZeldNov}
{\sl Zel'dovich~Ya.B., Novikov~I.D.} The structure and evolution of the Universe. University of Chicago Press. Chicago, 1983.
 

\bibitem{Kundt}
{\sl Kundt~W.} Tr\"agheitsbahnen in einem von G\"odel angegebenen kosmologischen Modell. ~ Zeitschrift f\"ur Physik, 145:5 (1956), 611--620.


\bibitem{ChWr}
{\sl Chandrasekhar~S., Wright~J.P.} The geodesics in G\"odel universe. ~ Proc. Natl. Acad. Sci. USA, 47 (1961), 341--347.

\bibitem{Synge}
{\sl Synge~J.L.} Relativity: The General Theory. North Holland Publishing Company. Amsterdam, 1960.


\bibitem{Stein}
{\sl Stein~H.} On the paradoxical time-structures of G\"odel. ~ Philosophy of Science, 37:4 (1970), 589--601.

\bibitem{Guts1973}
{\sl Guts~A.K.} On the timelike closed smooth curves in general theory of Relativity. (Russian). ~ Russian Phys. (Iz. VUZ), 1973, no. 9, pp. 33--36.


\bibitem{HockEll}
{\sl Hawking~S.W., Ellis~G.F.R.} The Large Scale structure of Space-Time. 
Cambridge University Press. Cambridge, 1973.


\bibitem{RayTha}
{\sl Raychaudhuri~A.K., Thakurta~S.N.G.} Homogeneous spacetimes of the G\"odel type. ~ Phys. Rev. D., 22:4 (1980), 802--806.


\bibitem{Guts1980}
{\sl Guts~A.K.} The topological structure of the G\"odel Universe. 
(Russian). ~ Russian Phys. (Iz. VUZ), 1980, no.  6, pp. 109--110.


\bibitem{Lev19900}
{\sl Levichev~A.V.} Methods for studing the causal structure of homogeneous Lorentz manifolds. ~ Siberian Math. J., 31:3 (1990), 395--408.
DOI:https://doi.org/10.1007/BF00970346


\bibitem{Lev1990}
{\sl Levichev~A.V.} Causal structure of left-invariant Lorentz metrics on the Lie group $M_2\times \mathbb{R}^2$. ~ Siberian Math. J., 31:4 (1990), 607--614. DOI:https://doi.org/10.1007/BF00970631


\bibitem{Lev1992}
{\sl Levichev~A.V.} Three totally vicious homogeneous Lorentz spaces. (Russian). Proceedings of the Sobolev Institute of Mathematics, 21(1992), 122--131. https://www.mathnet.ru/rus/mt/v21/p122

\bibitem{Ellis}
{\sl Ellis~G.F.R.} Contribution of Kurt G\"odel to Relativity and Cosmology. in: Lecture Notes in Logic 6. Petr H\'ajek (Ed.) G\"odel 96. Logical Foundations of Mathematics, Computer Science and Physics --- Kurt G\"odel Legacy. Brno, Czech Republic. August 1996, Proceedings. Berlin, Springer-Verlag, Berlin, Heidelberg, 1996. P. 34--49.

\bibitem{Steph}
{\sl Stephani~H. et al.} Exact solutions of Einstein's field equations. 2nd ed. Cambridge Univ. Press, 2003.

\bibitem{ShikYush}
{\sl Shikin~G.N., Yushchenko~A.P.} The energy spectrum of charged scalar particles in the G\"odel Universe. (Russian). ~ Bulletin of Peoples' Friendship University of Russia. Series: Mathematics. Physics. 2011, no. 3. pp. 112--118.



\bibitem{Andr17}
{\sl Andronikova~E.O.} On geometric properties of the G\"odel universe. (Russian). ~ Bulletin of Moscow Region State University. Series: Physics and Mathematics, 2017, no. 1, pp. 51--56. DOI:10.18384/2310-7251-2017-1-51-56.


\bibitem{Andr21}
{\sl Andronikova~E.O.} On geometric properties of the pseudo-Riemannian manifold of the G\"odel Universe. (Russian). ~ RSUH/RGGU Bulletin. ''Information Science. Information security. Mathematics'' Series, 2021. no. 1, pp. 66--80. DOI:10.28995/2686-679X-2021-1-66-80.


\bibitem{BerGich2001}
{\sl Berestovskii~V.N., Gichev~V.M.} Metrized semigroups. ~ J. of Math. Sciences (United States). 119:1 (2004), 10--29 (20). 


\bibitem{Ber2023}
{\sl Berestovskii~V.N.}
To the Segal Chronometric Theory. ~ Siber. Adv. in Math. 33:3 (2023), 165-180. https://doi.org/10.1134/S105513442303001X


\bibitem{BerZub2}
{\sl Berestovskii~V.N., Zubareva~I.A.} Sub-Lorentzian geodesics on $GL^{+}(2,\mathbb{C})$ with the generating space of hermitian matrices in the Lie algebra $\mathfrak{gl}^{+}(2,\mathbb{C})$. ~ https://arxiv.org/abs/2310.08905

\bibitem{Gav}
{\sl Gavrilov~S.P.} Geodesics of left-invariant metrics on connected two-dimensional non-abelian Lie group. (Russian). ~ Gravitation and the Relativity Theory. Kazan, 1981. Issue 18. pp. 28--44.

\bibitem{BrMarPr86}
{\sl Brychkov~Yu.A., Marichev~O.I., Prudnikov~A.P.} Tables of indefinite integrals. (Russian). M.: Nauka, 1986.

\bibitem{Miln}	
{\sl Milnor~J.} Curvatures of left invariant metrics on Lie groups. ~ Adv. Math., 21:3 (1976), 293--329. 


\end{thebibliography}
\end{document}